%
%
%
%
%
\documentclass{amsart}
\usepackage{latexsym,amsxtra,amscd,ifthen}
\usepackage{amsfonts}
\usepackage{color,graphicx}
\usepackage{verbatim}
\usepackage{amsmath}
\usepackage{amsthm}
\usepackage{amssymb}
\usepackage{url}

\usepackage{tikz}
\usetikzlibrary{arrows,shapes,chains}

\theoremstyle{plain}



\newtheorem{theorem}{Theorem}
\newtheorem{lemma}[theorem]{Lemma}
\newtheorem{proposition}[theorem]{Proposition}
\newtheorem{corollary}[theorem]{Corollary}

\numberwithin{theorem}{section}
\numberwithin{equation}{theorem}

\theoremstyle{definition}
\newtheorem{definition}[theorem]{Definition}
\newtheorem{example}[theorem]{Example}
\newtheorem{remark}[theorem]{Remark}
\newtheorem{question}[theorem]{Question}
\newtheorem*{question*}{Question}

\newcommand{\iso}{\xrightarrow{\,\smash{\raisebox{-0.65ex}{\ensuremath{\scriptstyle\sim}}}\,}}

\DeclareMathOperator{\gr}{gr}

\DeclareMathOperator{\GKdim}{GKdim}

\DeclareMathOperator{\Kdim}{Kdim}

\DeclareMathOperator{\Htr}{Htr}
\DeclareMathOperator{\injdim}{injdim}

\begin{document}

\title{Cancellation of Morita and Skew Types}

\author{Xin Tang, James J. Zhang and Xiangui Zhao}

\address{Tang: Department of Mathematics \& Computer Science, 
Fayetteville State University, Fayetteville, NC 28301,
USA}

\email{xtang@uncfsu.edu}

\address{Zhang: Department of Mathematics, Box 354350,
University of Washington, Seattle, Washington 98195, USA}

\email{zhang@math.washington.edu}

\address{Zhao: Department of Mathematics,
Huizhou University,
Huizhou 516007 Guandong, China}

\email{zhaoxg@hzu.edu.cn}

\begin{abstract}
We study both Morita cancellative and skew cancellative properties 
of noncommutative algebras as initiated recently in several papers 
and explore that which classes of noncommutative algebras are 
Morita cancellative (respectively, skew cancellative). Several new 
results concerning these two types of cancellations, as well as the 
classical cancellation, are proved.
\end{abstract}

\subjclass[2010]{Primary 16P99, 16W99}


\keywords{Zariski cancellation problem, Morita cancellation, skew
cancellation, Gelfand-Kirillov dimension, homological transcendence
degree}


\maketitle


\section*{Introduction}
\label{xxsec0}

Let $\Bbbk$ denote a base field. A $\Bbbk$-algebra $A$ is said to be 
{\it cancellative} in the category of $\Bbbk$-algebras if any 
$\Bbbk$-algebra isomorphism $\phi\colon A[x]\cong B[x]$ (where $B$ is 
another $\Bbbk$-algebra) implies that $A$ is isomorphic to $B$ as a 
$\Bbbk$-algebra. Geometrically, a $\Bbbk$-variety $V$ is called 
{\it cancellative} if any isomorphism $V\times {\mathbb A}^1
\cong W\times {\mathbb A}^1$ for another $\Bbbk$-variety $W$ implies
that $V\cong W$. Cancellative properties have been extensively 
investigated for commutative domains, especially for the commutative 
polynomial rings, in the literature \cite{AEH, Cr, CM, Fu, Ma1, MS, Ru}. 
Note that not every commutative domain is cancellative \cite{Da, Fi, Ho}.
In the commutative case, we sometimes call this kind of question 
``Zariski cancellation problem", as the cancellation problem of fields was 
first raised by Zariski in 1949 \cite{Se}. In the noncommutative case, 
the study of cancellative properties dates back to the early 1970s 
\cite{AEH, As, BR, CE, EH, EK}. Despite great success achieved in the 
work of Gupta \cite{Gu1, Gu2}, the Zariski cancellation problem still 
remains open for the commutative polynomial ring 
$\Bbbk[t_{1}, \cdots, t_{n}]$ with $n\geq 3$ in the characteristic 
zero case, see \cite{Kr, Gu3} for a history of this open problem. 

Recently, the study of cancellation problem has been revitalized for 
noncommutative algebras thanks to \cite{BZ1}, which mainly employs 
the famous Makar-Limanov invariants \cite{Ma1, Ma2} and the 
noncommutative discriminants as investigated in \cite{CPWZ1, CPWZ2}. 
It is usually very difficult to describe the discriminant for a given algebra; 
fortunately, many useful results on discriminants have been further 
established in \cite{BY, CYZ1, CYZ2, GKM, GWY, NTY, WZ}. Ever since 
\cite{BZ1}, there has been much progress made in the study of 
cancellation problem for noncommutative algebras 
\cite{BZ2, CYZ1, Ga, LR, LY, LeWZ, LuWZ, LMZ, Ta1, Ta2, TRZ}.
In particular, the cancellative property was established in 
\cite{LeWZ} for many classes of algebras which are not necessarily 
domains; and the Morita cancellation and derived cancellation were 
introduced and studied for algebras in \cite{LuWZ}. The cancellative 
properties for Poisson algebras were most recently examined in \cite{GaW}. 

The problem of skew cancellations was early considered in \cite{AKP} and 
revisited in recent papers \cite{Be, BHHV}. One of the main motivations of 
this paper is to introduce the multi-variable version of the skew 
cancellative property.
In particular, we study the following two closely related topics:
\begin{enumerate}
\item[(1)]
Strong Morita cancellation as initiated in \cite{LuWZ}.
\item[(2)]
Multi-variable version of the skew cancellation as initiated in 
\cite{AKP, Be, BHHV}.
\end{enumerate}
Although we are mainly interested in the Morita cancellation as introduced in 
\cite{LuWZ}, we also make some comments on the derived cancellation
in Section \ref{xxsec5}. Our ideas and methods are inspired by the ones 
in \cite{AEH, BR, CE, BZ1, CYZ1, CYZ2, LeWZ, LuWZ}. 

Before we state our results, we need to recall a list of basic
definitions about the Morita cancellation from \cite{BZ1, LeWZ, LuWZ}.
Later we will recall another list of definitions concerning the skew 
cancellation. We denote by $A[t_{1}, \cdots, t_{n}]$ the polynomial 
extension of an algebra $A$ with commuting multi-variables 
$t_1,\cdots,t_n$ and by $M(A)$ the category of all 
right $A$-modules. All algebraic objects are defined over the 
base field $\Bbbk$. 

\begin{definition}
\label{xxdef0.1}
Let $A$ be an algebra.
\begin{enumerate}
\item[(1)]
We say $A$ is {\it strongly cancellative} if 
any $\Bbbk$-algebra isomorphism 
$$A[s_{1}, \cdots, s_{n}] \cong B[t_{1}, \cdots, t_{n}],$$ 
for any $n\geq 1$ and any algebra $B$, implies 
that $A$ is isomorphic to $B$ as a $\Bbbk$-algebra.
\item[(2)]
We say $A$ is {\it universally cancellative} 
if, for every finitely generated commutative domain $R$ 
with an ideal $I\subset R$ such that $\Bbbk\longrightarrow R
\longrightarrow R/I$ is an isomorphism and every algebra 
$B$, any $\Bbbk$-algebra isomorphism 
$$A\otimes_{\Bbbk} R\cong B\otimes_{\Bbbk} R$$ 
implies that $A\cong B$ as $\Bbbk$-algebras.
\end{enumerate}
\end{definition}

\begin{definition}
\label{xxdef0.2}
Let $A$ be an algebra.
\begin{enumerate}
\item[(1)]
We say $A$ is {\it Morita cancellative} if any 
equivalence of abelian categories 
$$M(A[s])\cong M(B[t]),$$ 
for any algebra $B$, implies an 
equivalence of abelian categories
$$M(A)\cong M(B).$$ 
\item[(2)]
We say $A$ is {\it strongly Morita cancellative} if any 
equivalence of abelian categories 
$$M(A[s_{1}, \cdots, s_{n}])\cong M(B[t_{1}, \cdots, t_{n}]),$$ 
for any $n\geq 1$ and any algebra $B$, implies an 
equivalence of abelian categories
$$M(A)\cong M(B).$$ 
\end{enumerate}
\end{definition}

The above (strong) Morita cancellation of noncommutative algebras is a natural 
generalization of the classical cancellation in the category of commutative algebras. 
The following universal version of the Morita cancellation is similar to those in 
Definition \ref{xxdef0.1}. 

\begin{definition}
\label{xxdef0.3}
Let $A$ be an algebra.
We say $A$ is {\it universally Morita 
cancellative} if, for every finitely generated commutative 
domain $R$ with an ideal $I\subset R$ such that 
$\Bbbk\longrightarrow R\longrightarrow R/I$ is an isomorphism 
and every algebra $B$, any equivalence of abelian 
categories 
$$M(A\otimes_{\Bbbk} R)\cong M(B\otimes_{\Bbbk} R)$$ 
implies an equivalence of abelian categories
$$M(A)\cong M(B).$$ 
\end{definition}

We refer the reader to \cite{BZ1, LeWZ, LuWZ} and Section 1 for other basic 
definitions. Now we can state our results about the Morita cancellation. 
The first one is a Morita version of \cite[Proposition 1.3]{BZ1}. 

\begin{theorem}
\label{xxthm0.4}
Let $A$ be an algebra with center being the base field 
$\Bbbk$. Then $A$ is universally Morita cancellative.
\end{theorem}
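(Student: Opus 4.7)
The plan is to use the fact that the center is a Morita invariant, together with the augmentation $R\to R/I = \Bbbk$, first to force $Z(B) = \Bbbk$ and then to descend a Morita progenerator from $A\otimes R$ down to $A$. Concretely, I would translate the equivalence $M(A\otimes R)\cong M(B\otimes R)$ into the existence of a right $A\otimes R$-progenerator $P$ together with a $\Bbbk$-algebra isomorphism $\phi\colon \End_{A\otimes R}(P)\iso B\otimes R$. Taking centers and using both $Z(A) = \Bbbk$ and the standard identity $Z(C\otimes_\Bbbk R) = Z(C)\otimes_\Bbbk R$ for commutative $R$ over a field, this produces a $\Bbbk$-algebra isomorphism $R\cong Z(B)\otimes_\Bbbk R$.

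Next I would pin down $Z(B)$. The retraction $\mathrm{id}_{Z(B)}\otimes\pi\colon Z(B)\otimes R\to Z(B)$ coming from the augmentation $\pi\colon R\to\Bbbk$, combined with the previous isomorphism, realises $Z(B)$ as a quotient of the finitely generated $\Bbbk$-algebra $R$, so $Z(B)$ is itself finitely generated. The inclusion $Z(B)\hookrightarrow Z(B)\otimes R\cong R$ shows $Z(B)$ is a commutative domain, and additivity of $\GKdim$ for finitely generated commutative $\Bbbk$-domains gives $\GKdim Z(B) = 0$; hence $Z(B)$ is a finite field extension of $\Bbbk$. Pulling the augmentation of $R$ back through the isomorphism then restricts to an injective $\Bbbk$-algebra map $Z(B)\hookrightarrow\Bbbk$, so $Z(B) = \Bbbk$.

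To finish, I would descend the progenerator. The central copy of $R\subseteq A\otimes R$ makes $PI$ a submodule, and since $(A\otimes R)/I(A\otimes R)\cong A$, the quotient $\bar P := P/PI$ is a right $A$-module. Because $P$ is finitely generated projective, base change along $A\otimes R\twoheadrightarrow A$ preserves the progenerator property and yields a ring isomorphism
\[
\End_A(\bar P)\cong \End_{A\otimes R}(P)\,/\,I\cdot\End_{A\otimes R}(P).
\]
Under $\phi$, the central ideal $I\cdot\End_{A\otimes R}(P)$ corresponds to $B\otimes\sigma(I)$, where $\sigma$ is the $\Bbbk$-algebra automorphism of $R$ arising from comparing the two canonical realisations of the center of $\End_{A\otimes R}(P)$ (one as $Z(A\otimes R) = R$, the other as $Z(B\otimes R) = Z(B)\otimes R = R$). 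Since $\sigma$ is a $\Bbbk$-algebra automorphism and $R/I = \Bbbk$, we have $R/\sigma(I)\cong\Bbbk$, so $\End_A(\bar P)\cong B$. Hence $\bar P$ is a right $A$-progenerator with endomorphism ring $B$, giving the desired equivalence $M(A)\cong M(B)$.

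The step I expect to be most delicate is the derivation of $Z(B) = \Bbbk$ from the tensor-product isomorphism $R\cong Z(B)\otimes R$, since this is where the augmentation hypothesis $R/I = \Bbbk$ is genuinely used: both in producing finite generation of $Z(B)$ and in supplying the $\Bbbk$-algebra map $Z(B)\to\Bbbk$ at the end.
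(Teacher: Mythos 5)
Your proposal is correct and follows essentially the same route as the paper: use Morita invariance of the center to get $R\cong Z(B)\otimes_\Bbbk R$, deduce $Z(B)=\Bbbk$ from the domain property, GKdimension, and the augmentation $R/I=\Bbbk$, and then descend the equivalence through the central ideal $I$ (the paper cites \cite[Lemma 1.2(3)]{LuWZ} and \cite[Lemma 2.1(5)]{LuWZ} for the steps you carry out explicitly with progenerators). The only difference is that you unpack those cited lemmas by hand, which is fine.
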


The next result can be viewed as both a Morita version and a strengthened 
version of a partial combination of \cite[Theorem 4.1]{LeWZ} with \cite[Theorem 4.2]{LeWZ}. 
We denote the nilradical of an algebra $A$ by $N(A)$. The definition of the strongly retractable 
property is given in Definition \ref{xxdef1.1} (see also \cite[Definition 2.1]{LeWZ}).

\begin{theorem}
\label{xxthm0.5}
Let $A$ be an algebra with center $Z$ such that either $Z$ or $Z/N(Z)$ is strongly 
retractable {\rm{(}}respectively, strongly detectable{\rm{)}}, then $A$ is strongly 
cancellative and strongly Morita cancellative.
\end{theorem}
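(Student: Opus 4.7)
My plan is to reduce both cancellation claims to a statement about centers. The starting observation is that since all $s_i$ and $t_j$ are central,
$$Z(A[s_1,\ldots,s_n]) = Z(A)[s_1,\ldots,s_n]$$
and similarly for $B$; moreover, for any commutative algebra $C$ one has $N(C[s_1,\ldots,s_n]) = N(C)[s_1,\ldots,s_n]$, so passing modulo nilradicals interacts cleanly with polynomial extensions. Using this, I would produce, in each case, an isomorphism $\psi\colon Z(A)[s_1,\ldots,s_n] \iso Z(B)[t_1,\ldots,t_n]$ of commutative $\Bbbk$-algebras: in the strong cancellation case, $\psi$ is obtained by restricting $A[s_1,\ldots,s_n] \cong B[t_1,\ldots,t_n]$ to centers; in the strong Morita cancellation case, $\psi$ comes from the classical fact that the center is a Morita invariant (identifying with natural transformations of the identity functor on the module category).

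Now the strongly retractable (respectively, strongly detectable) hypothesis enters. If $Z$ itself has the property, I would apply it directly to $\psi$; if only $Z/N(Z)$ has it, I would first reduce $\psi$ modulo the nilradicals to obtain an induced isomorphism $(Z/N(Z))[s_1,\ldots,s_n] \iso (Z(B)/N(Z(B)))[t_1,\ldots,t_n]$, apply the property there, and then use the nilradical compatibility noted above to transport the information back. The conclusion in either case is not merely an abstract isomorphism $Z \cong Z(B)$, but also a compatible identification of the augmentation ideals of the two polynomial extensions, which is the form of the conclusion extracted in the proofs of \cite[Theorem 4.1]{LeWZ} and \cite[Theorem 4.2]{LeWZ}.

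With this center-level data in hand, the final step is descent. For strong cancellation, the descent is essentially already carried out in \cite[Theorems 4.1 and 4.2]{LeWZ}, and I would invoke those arguments to conclude $A \cong B$. For strong Morita cancellation, the additional input I would use is that an equivalence $F\colon M(A[s_1,\ldots,s_n]) \simeq M(B[t_1,\ldots,t_n])$ induces a bijection between two-sided ideals that is compatible with the center isomorphism $\psi$. The identification of augmentation ideals produced in the previous step then transports $(s_1,\ldots,s_n)A[s_1,\ldots,s_n]$ to $(t_1,\ldots,t_n)B[t_1,\ldots,t_n]$ under $F$, so $F$ descends to an equivalence $M(A) \simeq M(B)$ between the respective quotient module categories. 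The main obstacle I anticipate is precisely this last descent in the Morita setting: rigorously transferring the center-level match of augmentation ideals into an algebra-level match of two-sided ideals under $F$, especially under the weaker hypothesis that only $Z/N(Z)$ (rather than $Z$) is strongly retractable or strongly detectable.
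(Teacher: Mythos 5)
Your proposal follows essentially the same route as the paper's proof (Theorem \ref{xxthm2.1}): restrict to centers, use retractability/detectability there (lifting from $Z/N(Z)$ to $Z$ via the Brewer--Rutter argument, which is Lemma \ref{xxlem1.5}), conclude that $Z(B)[t_1,\dots,t_n]=Z(B)[\omega(s_1),\dots,\omega(s_n)]$, and then quotient by the ideal generated by the $s_i$'s and their images. The final descent step you flag as the main obstacle is exactly what the paper handles by citing \cite[Lemma 2.1(5)]{LuWZ} (a Morita equivalence passes to quotients by corresponding ideals generated by corresponding central elements), so your outline is correct and complete modulo that standard fact.
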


This theorem has several consequences. For example, by 
using Theorem \ref{xxthm0.5} (and combining with Lemma 
\ref{xxlem1.2}(2)), the hypotheses of being ``strongly 
Hopfian'' in \cite[Theorem 0.3, Lemma 3.6, Theorem 4.2(2), 
Corollary 4.3, Corollary 7.3]{LuWZ} and \cite[Theorem 0.2, 
Theorem 4.2]{LeWZ} are superfluous. Next we give an explicit 
application. Recall that a commutative algebra is called
{\it von Neumann regular} if it is reduced and has Krull dimension zero.

\begin{corollary}
\label{xxcor0.6}
Let $A$ be an algebra with center $Z$. 
\begin{enumerate}
\item[(1)]
If $Z/N(Z)$ is generated by a set of units of $Z/N(Z)$, 
then $Z$ and $A$ are strongly cancellative and strongly Morita 
cancellative.
\item[(2)]
If $Z/N(Z)$ is a von Neumann regular algebra, then $Z$ and $A$ are strongly 
cancellative and strongly Morita cancellative.
\item[(3)]
If $Z$ is a finite direct sum of local algebras, then $Z$ and $A$ 
are strongly cancellative and strongly Morita 
cancellative.
\end{enumerate}
\end{corollary}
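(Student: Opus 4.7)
The plan is to derive each of (a), (b), (c) directly from Theorem \ref{xxthm0.5} by verifying in each case that $Z$ or $Z/N(Z)$ is strongly retractable (or strongly detectable), whereupon Theorem \ref{xxthm0.5} immediately supplies both the strong cancellation and the strong Morita cancellation conclusions for $Z$ and $A$. So the task reduces to checking the retractability/detectability hypothesis under each of the three structural assumptions.

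For (a), the key input is the classical fact that for any commutative $\Bbbk$-algebra $R$, the unit group of $R[t_{1},\ldots,t_{n}]$ equals $R^{\times} + N(R)[t_{1},\ldots,t_{n}]$; equivalently, the units of the reduced polynomial ring $(R/N(R))[t_{1},\ldots,t_{n}]$ coincide with $(R/N(R))^{\times}$. For any $\Bbbk$-algebra isomorphism $\phi\colon Z[s_{1},\ldots,s_{n}]\cong B[t_{1},\ldots,t_{n}]$, reducing modulo nilradicals yields an isomorphism $\bar\phi$ which must restrict to a bijection between $(Z/N(Z))^{\times}$ and $(B/N(B))^{\times}$, both realized as constants. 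Since $Z/N(Z)$ is generated as a $\Bbbk$-algebra by its units, $\bar\phi$ carries $Z/N(Z)$ into the constants $B/N(B)$; applying the symmetric argument to $\bar\phi^{-1}$ forces equality and hence strong retractability of $Z/N(Z)$.

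For (b), I would first reduce to (a) outside characteristic $2$: in a commutative von Neumann regular ring every element factors as $a = eu$ with $e$ idempotent and $u$ a unit, and the identity $(1-2e)^{2}=1$ shows that $1-2e$ is a unit, exhibiting $e$ (and hence $a$) inside the subring generated by units. In characteristic $2$ this fails, and I would argue directly using that the idempotents of $R[t_{1},\ldots,t_{n}]$ coincide with those of $R$ for any commutative $R$, together with the Pierce decomposition of a von Neumann regular ring, to produce a canonical retraction preserved by arbitrary polynomial extensions. For (c), the primitive idempotents of $Z$ are finite in number and coincide with the primitive idempotents of $Z[s_{1},\ldots,s_{n}]$; any $\Bbbk$-algebra isomorphism therefore transports the decomposition $Z = \bigoplus_{i=1}^{m} L_{i}$ faithfully, reducing the problem to each local summand, and a local $\Bbbk$-algebra is strongly retractable because its unique maximal ideal is detected as the complement of the units among the constants modulo nilradical, giving a canonical residue-field retraction.

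The delicate step I foresee is the characteristic $2$ case of (b), where the reduction to (a) via unit generation breaks down and a direct idempotent/Pierce-sheaf argument is needed. Care must also be taken throughout that the invariants used -- units, idempotents, and primitive idempotents -- really do transport along \emph{arbitrary} $\Bbbk$-algebra isomorphisms of polynomial extensions, not merely along those compatible with a preferred polynomial subring.
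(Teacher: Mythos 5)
Your overall strategy coincides with the paper's: in each case one verifies that $Z$ or $Z/N(Z)$ is strongly retractable and then invokes Theorem \ref{xxthm0.5}. The paper, however, simply cites the relevant retractability statements --- the proof of \cite[Lemma 2.3]{LeWZ} for (1), \cite[Theorem 2]{BR} for (2), and \cite[Theorem 3]{BR} for (3) --- whereas you attempt to reprove them, and two of your steps have genuine gaps. First, in (1) the ``symmetric argument applied to $\bar\phi^{-1}$'' does not work as stated: it would require knowing that $B/N(B)$ is also generated by its units, which is not given. The correct way to pass from the containment $\bar\phi(Z/N(Z))\subseteq B/N(B)$ to equality is \cite[Lemma 2]{BR} (the same tool underlying Lemma \ref{xxlem5.1} of the paper): an isomorphism of polynomial rings in the same number of variables that carries one coefficient ring into the other carries it onto the other.

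Second, and more seriously, your part (2) is not a proof in characteristic $2$: the reduction to (1) via $1-2e$ fails there, and the ``Pierce decomposition'' alternative is only named, never carried out. Neither is needed. In a commutative von Neumann regular ring every element factors as $a=eu$ with $e=ax$ idempotent (where $a=a^2x$) and $u=a+(1-e)$ a unit, in every characteristic. Since $Z/N(Z)$ is reduced, so is $(Z/N(Z))[\overline{s}_1,\cdots,\overline{s}_n]\cong B[t_1,\cdots,t_n]$ and hence so is $B$; therefore units of $B[t_1,\cdots,t_n]$ lie in $B$, and idempotents of a commutative polynomial ring always lie in the coefficient ring. Thus $\phi(a)=\phi(e)\phi(u)\in B$, giving $\phi(Z/N(Z))\subseteq B$, and one concludes with \cite[Lemma 2]{BR} as before; this is essentially the content of \cite[Theorem 2]{BR}. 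Your outline of (3) --- transport of the finitely many primitive idempotents, reduction to the local summands, and the observation that a local ring is generated by its units because $z=(1+z)-1$ for $z$ in the maximal ideal --- is sound, again provided the final containment is upgraded to equality by the same lemma rather than by symmetry, and provided the nilpotents are handled either by working with $Z/N(Z)$ throughout or by lifting via Lemma \ref{xxlem1.5}.
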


Note that Corollary \ref{xxcor0.6}(2) answers 
\cite[Question 0.1]{LeWZ} positively. All statements concerning
the Morita cancellation in Corollary \ref{xxcor0.6} are new. The 
above corollary also has many applications in practice. 

The second part of the paper deals with the skew cancellation which 
is another natural generalization of the classical cancellation. Here we 
replace the polynomial extensions by the Ore extensions. Let $A$ be 
an algebra. Let $\sigma$ be an algebra automorphism of $A$ and $\delta$ 
be a $\sigma$-derivation of $A$. Then one can form the Ore extension, 
denoted by $A[t;\sigma,\delta]$, which shares many nice properties with 
the polynomial extension $A[t]$. The reader is referred to \cite[Chapter 1]{MR} 
for more details. We say $\sigma$ is {\it locally algebraic} if every finite
dimensional subspace of $A$ is contained in a $\sigma$-stable 
finite dimensional subspace of $A$. It is obvious that the identity map 
is locally algebraic. An iterated Ore extension of $A$ is of the form 
$$A[t_1;\sigma_1,\delta_1][t_2;\sigma_2, \delta]\cdots 
[t_n;\sigma_n,\delta_n]$$
where $\sigma_{i}$ is an algebra automorphism of 
$A_{i-1}:=A[t_1;\sigma_1,\delta_1]\cdots [t_{i-1};\sigma_{i-1},\delta_{i-1}]$
and $\delta_i$ is a $\sigma_i$-derivation of $A_{i-1}$.

\begin{definition}
\label{xxdef0.7}
Let $A$ be an algebra.
\begin{enumerate}
\item[(1)]
We say $A$ is {\it skew cancellative} if any 
isomorphism of algebras
$$A[t;\sigma,\delta]\cong A'[t';\sigma',\delta']$$ 
for another algebra $A'$, implies an isomorphism of algebras
$$A\cong A'.$$ 
\item[(2)]
We say $A$ is {\it strongly skew cancellative} if any 
isomorphism of algebras
$$A[t_1;\sigma_1,\delta_1]\cdots [t_n;\sigma_n,\delta_n]
\cong A'[t'_1;\sigma'_1,\delta'_1]\cdots [t'_n;\sigma'_n,\delta'_n]$$ 
for any $n\geq 1$ and any algebra $A'$, implies an isomorphism of algebras
$$A\cong A'.$$ 
\end{enumerate}
\end{definition}

Occasionally, we will restrict our attention to special types of Ore extensions and/or 
special classes of base algebras. For example, we make the following definition.

\begin{definition}
\label{xxdef0.8}
Let $A$ be an algebra.
\begin{enumerate}
\item[(1)]
We say $A$ is {\it $\sigma$-cancellative} if in Definition 
\ref{xxdef0.7}(1), only Ore extensions with 
$\delta=0$ and $\delta'=0$ are considered.
We say $A$ is {\it strongly $\sigma$-cancellative} if in Definition 
\ref{xxdef0.7}(2), only Ore extensions with 
$\delta_i=0$ and $\delta'_i=0$, for all $i$, are considered.
\item[(2)]
We say $A$ is {\it $\delta$-cancellative} if in Definition 
\ref{xxdef0.7}(1), only Ore extensions with 
$\sigma=Id_{A}$ and $\sigma'=Id_{A'}$ are considered.
We say $A$ is {\it strongly $\delta$-cancellative} if in Definition 
\ref{xxdef0.7}(2), only Ore extensions with 
$\sigma_i=Id$ and $\sigma'_i=Id$, for all $i$, are considered.
\item[(3)]
We say $A$ is {\it $\sigma$-algebraically cancellative} if in Definition 
\ref{xxdef0.7}(1), only Ore extensions with locally algebraic 
$\sigma$ and $\sigma'$ are considered.
We say $A$ is {\it strongly $\sigma$-algebraically cancellative} if in Definition 
\ref{xxdef0.7}(2), only Ore extensions with locally algebraic 
$\sigma_i$ and $\sigma'_i$ are considered.
\end{enumerate}
\end{definition}

A classical cancellation problem is equivalent to a skew cancellation 
problem with $(\sigma, \delta)=(Id,0)$. Therefore, the skew (or 
$\sigma$-, or $\delta$-)cancellation is a natural extension and a 
strictly stronger version of the classical cancellation. It follows 
from the definition that the $\sigma$-algebraically cancellative
property is stronger than the $\delta$-cancellative property. See
Figure 1 after Example \ref{xxex5.5}. The $\delta$-cancellation 
was first considered in \cite{AKP}, and then in \cite{Be}. In 
\cite[Theorem 1.2]{BHHV}, a very nice result concerning both 
$\sigma$- and $\delta$-cancellations was proved, however, the skew 
cancellative property remains open. As remarked in \cite{BHHV}, 
``{\it would be interesting to give a `unification' of 
the two results occurring in \cite[Theorem 1.2]{BHHV} and 
prove that skew cancellation holds for general skew polynomial
extensions, although this appears to be considerably more 
subtle than the cases we consider.}'' One of our main goals in the 
second half of the paper is to introduce a unified approach to the 
skew cancellation problem (including both $\sigma$- and $\delta$-
cancellation). 

To state our main results, we need to recall the definition of 
a divisor subalgebra as introduced in \cite{CYZ1}. Let $A$ be a 
domain. Let $F$ be a subset of $A$. Let $Sw(F)$ 
denote the set of $g\in A$ such that $f=agb$ for some $a, b\in A$ and 
$0\neq f\in F$. That is, $Sw(F)$ consists of all the subwords of the 
elements in $F$. We set $D_{0}(F)=F$ and inductively define $D_{n}(F)$ for $n\geq 1$ as the 
$\Bbbk$-subalgebra of $A$ generated by $Sw(D_{n-1}(F))$. The 
subalgebra ${\mathbb D}(F)=\bigcup_{n\geq 0} D_{n}(F)$ is called the {\it divisor 
subalgebra} of $A$ generated by $F$. If $F$ is the singleton $\{f\}$, 
we simply write ${\mathbb D}(\{f\})$ as ${\mathbb D}(f)$. See Section 5 
for more details.

\begin{theorem}
\label{xxthm0.9} 
Let $A$ be an affine domain of finite GKdimension. Suppose that
${\mathbb D}(1)=A$. Then $A$ is strongly $\sigma$-algebraically 
cancellative. As a consequence, it is strongly $\delta$-cancellative.
\end{theorem}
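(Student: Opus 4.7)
The plan is to fix any isomorphism $\phi\colon B \cong B'$ between the iterated Ore extensions $B = A[t_1;\sigma_1,\delta_1]\cdots[t_n;\sigma_n,\delta_n]$ and $B' = A'[t'_1;\sigma'_1,\delta'_1]\cdots[t'_n;\sigma'_n,\delta'_n]$ (with all $\sigma_i,\sigma'_i$ locally algebraic) and to identify $A$ inside $B$ as the divisor subalgebra ${\mathbb D}_B(1)$. Standard Ore extension theory gives that $B$ and $B'$ are affine domains, and local algebraicity yields $\GKdim(B) = \GKdim(A) + n$ and $\GKdim(B') = \GKdim(A') + n$, so both sides have the same finite GK-dimension and $\GKdim(A) = \GKdim(A')$.

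The key computation is ${\mathbb D}_B(1) = A$. The outermost Ore extension $B = A_{n-1}[t_n;\sigma_n,\delta_n]$ carries a $t_n$-degree filtration whose associated graded $A_{n-1}[t_n;\sigma_n]$ is a domain, so $t_n$-degree is additive, units of $B$ lie in $A_{n-1}$, and any factorization $f = agb$ in $B$ with $0 \neq f \in A_{n-1}$ forces each of $a,g,b$ into $A_{n-1}$. Iterating from $t_n$ down to $t_1$, units of $B$ are units of $A$ and subwords in $B$ of elements of $A$ lie in $A$. Induction on the divisor filtration then gives ${\mathbb D}_B(1) \subseteq A$, which combined with the monotonicity ${\mathbb D}_A(1) \subseteq {\mathbb D}_B(1)$ and the hypothesis ${\mathbb D}_A(1) = A$ yields ${\mathbb D}_B(1) = A$. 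The same argument in $B'$ gives ${\mathbb D}_{B'}(1) = {\mathbb D}_{A'}(1) \subseteq A'$. Since ring isomorphisms preserve units, subwords and subalgebras, $\phi$ intertwines these divisor constructions, so $\phi(A) = \phi({\mathbb D}_B(1)) = {\mathbb D}_{B'}(1) \subseteq A'$ embeds $A$ as a subalgebra of $A'$ of the same GK-dimension.

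For the reverse inclusion $A' \subseteq \phi(A)$, I would argue by induction on $n$, peeling off the outermost Ore variable. Given $a' \in A'$, expand $\phi^{-1}(a') = \sum_i f_i\, t_n^i$ over $A_{n-1}$ and apply $\phi$. A symmetric analysis of $\phi^{-1}(t'_n)$ in the $t_n$-filtration of $B$ shows that $\phi(t_n)$ has $t'_n$-degree exactly $1$ with leading coefficient a unit of $A'_{n-1}$; indeed, $\phi(t_n)\in A'_{n-1}$ would embed the whole $B = \phi(A_{n-1})[\phi(t_n);\tilde{\sigma},\tilde{\delta}]$ into $A'_{n-1}$, contradicting GK-dimension additivity for Ore extensions with locally algebraic $\sigma$. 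Additivity of the $t'_n$-degree in the domain $B'$ then forces $f_i = 0$ for $i>0$, yielding $\phi(A_{n-1})=A'_{n-1}$ and $\phi^{-1}(a')\in A_{n-1}$. Applying the same analysis recursively to the restricted iso $\phi|_{A_{n-1}}\colon A_{n-1}\cong A'_{n-1}$ (an iterated Ore extension iso of depth $n-1$ over $A$ and $A'$, with the deepest-base hypothesis ${\mathbb D}_A(1)=A$ still intact) eventually yields $\phi(A)=A'$, hence $A\cong A'$. The strong $\delta$-cancellation follows because $\sigma = Id$ is trivially locally algebraic.

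The step I expect to be the main obstacle is the inductive reverse inclusion: the total $t$-degree on an iterated Ore extension is not generally a submultiplicative filtration (the $\sigma_i$'s can raise degrees in inner variables), so one cannot complete the reverse-inclusion argument in a single shot for $n\geq 2$. The remedy is the layered one-variable-at-a-time induction above, where the locally algebraic hypothesis is exactly what guarantees the GK-dimension additivity at each Ore step needed to rule out $\phi(t_k)\in A'_{k-1}$ and thereby make the degree-comparison work at every layer; carefully propagating the divisor hypothesis together with local algebraicity through each peeling step is where the technical work resides.
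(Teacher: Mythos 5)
Your first half coincides with the paper's argument: in any Ore extension $C=R[t;\sigma,\delta]$ of a domain $R$ the $t$-degree is additive, so a factorization $f=agb$ with $0\neq f\in R$ forces $a,g,b\in R$; iterating gives ${\mathbb D}_{B}(1)={\mathbb D}_A(1)=A$, and since isomorphisms preserve divisor subalgebras, $\phi(A)={\mathbb D}_{B'}(1)\subseteq A'$ (this is exactly Lemma \ref{xxlem4.4}(4)--(7)). The gap is in your reverse inclusion. The claims that $\phi(t_n)$ has $t'_n$-degree exactly $1$ with unit leading coefficient and that $\phi(A_{n-1})=A'_{n-1}$ are false in general: already for $A=A'=\Bbbk$ and $n=2$, the automorphism of $\Bbbk[t_1,t_2]$ swapping the two variables sends $t_2$ to an element of $t'_2$-degree $0$ and maps $A_1=\Bbbk[t_1]$ onto $\Bbbk[t_2]\neq A'_1$. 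Your argument that $\phi(t_n)\in A'_{n-1}$ would ``embed $B$ into $A'_{n-1}$'' silently assumes $\phi(A_{n-1})\subseteq A'_{n-1}$, which is precisely what the induction is supposed to produce; and without control of the $t'_n$-degrees of the $\phi(f_i)$, the terms $\phi(f_i)\phi(t_n)^i$ can have coinciding top degrees and cancel, so degree additivity alone does not force $f_i=0$ for $i>0$. So the layer-by-layer peeling does not close.

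The paper's proof sidesteps the tower entirely at this point. Set $C=\phi^{-1}(A')\subseteq B$; by the first half $A\subseteq C$, and $C\cong A'$ so $\GKdim C=\GKdim A'$. Local algebraicity gives $\GKdim B=\GKdim A+n$ (Lemma \ref{xxlem3.1}(2)) and $\GKdim B'\geq \GKdim A'+n$, whence $\GKdim A\leq\GKdim C=\GKdim A'\leq\GKdim B'-n=\GKdim B-n=\GKdim A$. Proposition \ref{xxpro3.3} (proved via the filtration argument of Lemma \ref{xxlem3.2}: a subalgebra of an iterated Ore extension of a domain $A$ that properly contains $A$ has strictly larger GK-dimension) then forces $C=A$, i.e.\ $\phi(A)=A'$. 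If you wish to rescue your inductive scheme you would first have to normalize $\phi$ against permutations and reorderings of the Ore variables; the GK-dimension comparison is exactly the device that makes this unnecessary.
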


To prove several classes of algebras are skew cancellative, we need
to use a structure result of division algebras. Recall from \cite{Sc} that a 
simple artinian ring $S$ is {\it stratiform} over $\Bbbk$ if there is a chain
of simple artinian rings
$$S = S_n \supseteq S_{n-1} \supseteq \cdots \supseteq S_1 \supseteq S_0 =\Bbbk$$
where, for every $i$, either
\begin{enumerate}
\item[(i)] 
$S_{i+1}$ is finite over $S_{i}$ on both sides; or
\item[(ii)] 
$S_{i+1}$ is equal to the quotient ring of the 
Ore extension $S_i[t_i; \sigma_i, \delta_i]$ for 
an automorphism $\sigma_i$ of $S_i$ and $\sigma_i$-derivation 
$\delta_i$ of $S_i$.
\end{enumerate}
Such a chain of simple artinian rings is called a stratification of 
$S$. The {\it stratiform length} of $S$ is the number of steps in 
the chain that are of type (ii). An important fact established in 
\cite{Sc} is that the stratiform length is an invariant of $S$. 
A Goldie prime ring $A$ is called {\it stratiform} if the quotient 
division ring of $A$, denoted by $Q(A)$, is stratiform.

\begin{theorem}
\label{xxthm0.10} 
Let $A$ be a noetherian domain that is stratiform. Suppose 
that ${\mathbb D}(1)=A$. Then $A$ is strongly skew cancellative
in the category of noetherian stratiform domains.
\end{theorem}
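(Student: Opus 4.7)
The plan, mirroring the proof of Theorem~\ref{xxthm0.9}, is to use Schofield's invariance of stratiform length in place of GK-dimension (which is the reason we have to restrict to the category of noetherian stratiform domains, since a general $\sigma$ need not preserve GK-dimension) combined with a divisor-subalgebra argument that extracts the base algebra of an iterated Ore extension. Fix an algebra isomorphism $\phi\colon B \iso B'$ where $B = A[t_1;\sigma_1,\delta_1]\cdots[t_n;\sigma_n,\delta_n]$ and $B' = A'[t'_1;\sigma'_1,\delta'_1]\cdots[t'_n;\sigma'_n,\delta'_n]$, with $A$ as in the hypothesis and $A'$ a noetherian stratiform domain.

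The first step is a divisor-subalgebra lemma: for any iterated Ore extension $B$ over a domain $A$, ${\mathbb D}^B(1) = {\mathbb D}^A(1)$. Indeed, $B$ is itself a domain, so an element of positive leading degree in some $t_i$ has a nonzero leading term and cannot be inverted; hence the units of $B$ coincide with those of $A$. Moreover, if $f\in A$ and $f = agb$ with $a,g,b\in B$, comparison of leading terms in the outermost Ore variable (and then inductively in the rest) forces $a,g,b\in A$. Iterating the construction of the divisor subalgebra, every $D_n(\{1\})$ computed in $B$ sits inside $A$ and matches the corresponding $D_n(\{1\})$ computed in $A$. Under the hypothesis ${\mathbb D}^A(1) = A$ this reads ${\mathbb D}^B(1) = A$, while the same lemma applied to $B'$ (without any hypothesis on $A'$) gives ${\mathbb D}^{B'}(1) = {\mathbb D}^{A'}(1) \subseteq A'$. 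Because the divisor subalgebra is intrinsic to the ambient algebra, $\phi$ carries ${\mathbb D}^B(1)$ onto ${\mathbb D}^{B'}(1)$, producing an injective $\Bbbk$-algebra map $\phi|_A\colon A \hookrightarrow A'$ whose image is exactly ${\mathbb D}^{A'}(1)$.

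It remains to upgrade this embedding to an isomorphism $A \cong A'$. Here I would invoke Schofield's invariance of stratiform length on $Q(B) \cong Q(B')$: each Ore extension step is a type (ii) layer, so $\ell(Q(B)) = \ell(Q(A)) + n$ and $\ell(Q(B')) = \ell(Q(A')) + n$, giving $\ell(Q(A)) = \ell(Q(A'))$. Combined with the fact that $\phi(A) = {\mathbb D}^{A'}(1)$ sits inside the noetherian stratiform domain $A'$ with the same stratiform length as $A'$, this should force $\phi(A) = A'$ and hence $A \cong A'$. The main obstacle is exactly this closing step: since the hypothesis ${\mathbb D}(1) = A$ is asymmetric, one cannot simply apply the first half of the argument to $\phi^{-1}$, so the stratiform structure of $A'$ together with noetherianity must be leveraged directly -- most likely via an inductive descent along the stratification chain of $A'$ comparing Goldie ranks or simple-artinian dimensions at each level -- to rule out a proper inclusion ${\mathbb D}^{A'}(1) \subsetneq A'$ of noetherian stratiform domains of equal stratiform length.
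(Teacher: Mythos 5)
Your first half is exactly the paper's: the divisor subalgebra of an iterated Ore extension $\overline{A}=A[t_1;\sigma_1,\delta_1]\cdots[t_n;\sigma_n,\delta_n]$ satisfies ${\mathbb D}_{\overline{A}}(1)={\mathbb D}_A(1)$ (Lemma \ref{xxlem4.4}(4,5,7)), so the hypothesis ${\mathbb D}_A(1)=A$ gives $\phi(A)={\mathbb D}_{\overline{B}}(1)\subseteq A'$, and stratiform length (equivalently $\Htr$, via \cite[Proposition 1.8]{YZ} and Lemma \ref{xxlem3.4}) is the right replacement for GKdimension. The gap is in your closing step, and it is a real one: a proper inclusion of noetherian stratiform domains with equal stratiform length is perfectly possible --- e.g.\ $\Bbbk[x^2,x^3]\subsetneq \Bbbk[x]$, both with quotient field $\Bbbk(x)$ and stratiform length $1$ --- so no amount of ``inductive descent along the stratification chain of $A'$'' can rule out ${\mathbb D}_{A'}(1)\subsetneq A'$ from the data you retain. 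Equality of stratiform lengths only buys you, via Schofield \cite[Theorem 24]{Sc}, that $Q$ of the larger algebra is a finitely generated module over $Q$ of the smaller one; it says nothing about the algebras themselves without extra structure on the inclusion.

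The paper's fix is to move the inclusion back into $\overline{A}$, where that extra structure exists. Set $B':=\phi^{-1}(A')$, so that $A\subseteq B'\subseteq\overline{A}$ and $\Htr Q(B')=\Htr Q(A')=\Htr Q(A)$. Proposition \ref{xxpro3.6} then gives $A=B'$: if $A\subsetneq B'$, the ${\mathbb N}$-filtration of the iterated Ore extension (Lemma \ref{xxlem3.2}) produces an element $a\in B'$ with $Q(A)+Q(A)a+Q(A)a^2+\cdots$ a direct sum, forcing $\dim_{Q(A)}Q(B')=\infty$, which contradicts the finiteness supplied by Schofield's theorem. In other words, the asymmetry you correctly flagged is resolved not by analyzing ${\mathbb D}_{A'}(1)$ inside $A'$, but by exploiting the Ore-extension filtration on the $A$-side of the isomorphism; your argument is missing precisely this pivot and the accompanying direct-sum estimate.
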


The following algebras are stratiform with ${\mathbb D}(1)=A$.
As a result, they are skew cancellative.
\begin{enumerate}
\item[(a)]
Quantum torus or quantum Laurent polynomial algebras given 
in Example \ref{xxex4.3}(5),
\item[(b)]
Localized quantum Weyl algebras $B^q_1(\Bbbk)$ in 
Example \ref{xxex4.3}(2). 
\item[(c)]
Affine commutative domain $A$ of GKdimension one satisfying
$A^{\times}\supsetneq \Bbbk^{\times}$ [Lemma \ref{xxlem4.4}(9)].
\item[(d)]
Any noetherian domain that can be written as a finite tensor product 
(resp. some version of a twisted tensor product) of the algebras in parts (a,b,c).
\end{enumerate}

We further prove a few results concerning the strongly cancellation, 
the strongly Morita cancellation, and the skew cancellation of noncommutative 
algebras, see Theorems \ref{xxthm4.6} and \ref{xxthm5.4}, and Proposition \ref{xxpro5.8}.

The paper is organized as follows. Section 1 reviews some basic materials. 
Section 2 concerns the Morita cancellative property where Theorem 
\ref{xxthm0.5} and Corollary \ref{xxcor0.6} are proven. In Section 3
we recall some basic properties about the Gelfand-Kirillov dimension 
and homological transcendence degree of noncommutative algebras. 
Then we review the definition of a divisor subalgebra and study the
skew cancellative property in Section 4. We also prove our 
main results, namely, Theorems \ref{xxthm0.9} and
\ref{xxthm0.10} in Section 4. The final section contains some 
comments, examples, remarks and questions.

\section{Preliminaries}
Throughout $\Bbbk$ denotes a base field. All algebras are $\Bbbk$-algebras 
and all algebra homomorphisms are $\Bbbk$-linear algebra homomorphisms. As needed, we will 
continue to use the notation and convention introduced in \cite{BZ1, LeWZ, LuWZ}.

We only recall a small selected set of definitions. 

\begin{definition}
\cite[Definition 2.1]{LeWZ}
\label{xxdef1.1} 
Let $A$ be an algebra.
\begin{enumerate}
\item[(1)]
We say $A$ is {\it retractable} if, for any algebra $B$, any algebra 
isomorphism 
$$\phi : A[s] \cong B[t]$$
implies that $\phi(A) = B$.
\item[(2)]
\cite[p.311]{AEH}
We say $A$ is {\it strongly retractable} if, for any algebra $B$ and 
integer $n \geq 1$, any
algebra isomorphism 
$$\phi : A[s_1,\cdots, s_n] \cong B[t_1,\cdots, t_n]$$ 
implies that $\phi(A) = B$.
\end{enumerate}
\end{definition}

The following lemma of Brewer-Rutter \cite{BR} is useful. 

\begin{lemma} \cite[Lemma 1]{BR}
\label{xxlem1.2}
Let $A$ be an algebra with center $Z$. 
\begin{enumerate}
\item[(1)] 
If $f_{1}, \cdots, f_{n}$ are $Z$-generators of the polynomial ring 
$Z[Y_{1}, \cdots, Y_{n}]$, then the $A$-endomorphism $\tau$ of 
$A[Y_{1}, \cdots, Y_{n}]$ defined by $\tau(Y_{i})=f_{i}$ for each 
$1\leq i\leq n$ is an isomorphism.
\item[(2)]
As a special case, if $A$ is commutative, and if 
$f_{1}, \cdots, f_{n}$ are $A$-generators of the polynomial ring 
$A[Y_{1}, \cdots, Y_{n}]$, then $A\{f_1,\cdots,f_n\}=
A[f_{1}, \cdots, f_{n}]$.
\end{enumerate}
\end{lemma}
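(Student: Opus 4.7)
The plan is to realize the $A$-algebra endomorphism $\tau$ as $\mathrm{id}_A\otimes_Z\phi$ for a suitable $Z$-algebra endomorphism $\phi$ of $Z[Y_1,\dots,Y_n]$, and then invoke the classical fact that a surjective $R$-algebra endomorphism of a polynomial ring $R[Y_1,\dots,Y_n]$ is automatically an automorphism.

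First I would check that $\tau$ is well-defined. Since $f_i\in Z[Y_1,\dots,Y_n]$ and every element of $Z$ commutes with $A$ while each $Y_j$ commutes with $A$ and with the other indeterminates, the $f_i$ lie in the center of $A[Y_1,\dots,Y_n]$; they pairwise commute and commute with all of $A$, which is exactly what is needed for the assignment $a\mapsto a$ (for $a\in A$), $Y_i\mapsto f_i$ to extend to a unique $A$-algebra endomorphism.

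Next I would use the canonical identification of $\Bbbk$-algebras
\[
A[Y_1,\dots,Y_n]\;\cong\; A\otimes_Z Z[Y_1,\dots,Y_n],
\]
available because $Z$ is central in $A$ and the $Y_j$ commute with everything. Under this identification $\tau$ corresponds to $\mathrm{id}_A\otimes\phi$, where $\phi\colon Z[Y_1,\dots,Y_n]\to Z[Y_1,\dots,Y_n]$ is the $Z$-algebra endomorphism with $\phi(Y_i)=f_i$. The hypothesis that $f_1,\dots,f_n$ are $Z$-generators of $Z[Y_1,\dots,Y_n]$ is precisely the surjectivity of $\phi$, and hence also of $\tau$.

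The main obstacle is showing $\phi$ is injective. The strategy is to pick $g_j\in Z[Y_1,\dots,Y_n]$ with $g_j(f_1,\dots,f_n)=Y_j$ (available by surjectivity), argue via a transcendence-degree / Krull-dimension bookkeeping — or equivalently Vasconcelos' theorem in the Noetherian setting — that the $g_j$'s themselves generate $Z[Y_1,\dots,Y_n]$ as a $Z$-algebra, and verify that the resulting endomorphism $\psi\colon Y_j\mapsto g_j$ is a two-sided inverse of $\phi$. Once $\phi$ is an automorphism of $Z[Y_1,\dots,Y_n]$, applying the functor $A\otimes_Z-$ yields that $\tau=\mathrm{id}_A\otimes\phi$ is an automorphism of $A[Y_1,\dots,Y_n]$, proving part (1). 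Part (2) is then the specialization $Z=A$: the bijectivity of $\tau$ translates directly into $A\{f_1,\dots,f_n\}=A[f_1,\dots,f_n]$.
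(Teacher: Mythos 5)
The paper does not actually prove this lemma: it is quoted verbatim from Brewer--Rutter \cite[Lemma 1]{BR}, so the only thing to compare against is the classical argument there. Your outline follows the same route as that argument --- write $A[Y_1,\dots,Y_n]\cong A\otimes_Z Z[Y_1,\dots,Y_n]$, reduce to the commutative endomorphism $\phi$ of $Z[Y_1,\dots,Y_n]$, observe that the generation hypothesis is exactly surjectivity of $\phi$, and upgrade surjectivity to bijectivity --- and the global structure (well-definedness of $\tau$, the tensor decomposition, and the deduction of part (2) as the case $Z=A$) is correct.

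The one soft spot is the injectivity of $\phi$, which is the entire content of the lemma and which you leave as a sketch with two proposed routes, one of which does not work in the stated generality. The center $Z$ of an arbitrary $\Bbbk$-algebra need not be a domain, reduced, or Noetherian, so ``transcendence-degree / Krull-dimension bookkeeping'' has no purchase: if $Z$ has nilpotents or zero divisors, you cannot pass to a residue field without possibly killing the coefficients of a putative relation among the $f_i$. Likewise, your plan to show that the $g_j$ (with $g_j(f_1,\dots,f_n)=Y_j$) themselves generate, and hence that $\psi$ is a two-sided inverse, begs the question --- proving the $g_j$ generate is the same kind of statement you are trying to prove for the $f_i$. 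The correct route is the one you name in passing: let $Z_0\subseteq Z$ be the $\Bbbk$-subalgebra generated by the finitely many coefficients of the $f_i$ and the $g_j$; then $Z_0$ is affine, hence Noetherian, $\phi$ restricts to a surjective endomorphism $\phi_0$ of $Z_0[Y_1,\dots,Y_n]$, and a surjective endomorphism of a Noetherian commutative ring is injective (the stabilizing chain $\ker\phi_0\subseteq\ker\phi_0^2\subseteq\cdots$, i.e.\ the ring-theoretic form of Vasconcelos's theorem). Since $Z[Y_1,\dots,Y_n]\cong Z\otimes_{Z_0}Z_0[Y_1,\dots,Y_n]$ and $\phi=\mathrm{id}_Z\otimes\phi_0$, bijectivity passes from $\phi_0$ to $\phi$ and then, via $\tau=\mathrm{id}_A\otimes_Z\phi$, to $\tau$. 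With that step made explicit your proof is complete and agrees with the cited one.
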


For any algebra $A$, let $Z(A)$ or simply $Z$ denote the center of $A$ 
and let $N(A)$ denote the nilpotent radical of $A$. Suppose two algebras
$R$ and $S$ are Morita equivalent. Let 
\begin{equation}
\label{E1.2.1}\tag{E1.2.1}
\omega: Z(R)\to Z(S)
\end{equation}
be the isomorphism of the centers given in \cite[Lemma 1.2(3)]{LuWZ}. 
Note that we can use all facts listed in \cite[Lemma 1.2(3)]{LuWZ}.

In the following two definitions, we have the following abbreviations.
$${\text{$S=$strongly, $M=$Morita, and $R=$reduced.
}}$$

\begin{definition}
\label{xxdef1.3}
Let $A$ be an algebra.
\begin{enumerate}
\item[(1)]
We say $A$ is {\it Morita $Z$-detectable} or simply 
{\it MZ-detectable} if, 
for any algebra $B$ and any equivalence of abelian categories 
$$\mathcal{E}\colon M(A[s])\longrightarrow M(B[t]),$$ 
with the induced isomorphism, see \eqref{E1.2.1},
$$\omega \colon Z(A[s])(=Z(A)[s])\longrightarrow Z(B[t])(=Z(B)[t]),$$ 
implies that 
$$Z(B)[t]=Z(B)\{\omega(s)\}.$$ 
By Lemma \ref{xxlem1.2}, we actually have that 
$$Z(B)[t]=Z(B)[\omega(s)].$$ 
\item[(2)]
We say $A$ is {\it strongly Morita $Z$-detectable} or simply 
{\it SMZ-detectable} if, for each 
$n\geq 1$ and any algebra $B$, any equivalence of abelian categories 
$$\mathcal{E}\colon M(A[s_{1}, \cdots, s_{n}])\longrightarrow 
M(B[t_{1}, \cdots, t_{n}])$$ 
implies that, with $\omega$ given in \eqref{E1.2.1} for algebras
$R=A[s_{1}, \cdots, s_{n}]$ and $S=B[t_{1}, \cdots, t_{n}]$,
$$Z(B)[t_{1}, \cdots, t_{n}]
=Z(B)\{\omega(s_{1}), \cdots, \omega(s_{n})\}.$$
Once again, by Lemma \ref{xxlem1.2}, we actually have that
$$Z(B)[t_{1}, \cdots, t_{n}]
=Z(B)[\omega(s_{1}), \cdots, \omega(s_{n})].$$
\end{enumerate} 
\end{definition}

In the next definition, $\omega$ is given as in \eqref{E1.2.1}
for appropriate $R$ and $S$ and $\overline{\omega}$ is 
an induced isomorphism in appropriate setting. 

\begin{definition}
\label{xxdef1.4}
Let $A$ be an algebra.
\begin{enumerate}
\item[(1)]
We say $A$ is {\it reduced Morita $Z$-detectable} or {\it RMZ-detectable} 
if, for any algebra $B$ and any equivalence of abelian categories 
$$E\colon M(A[s])\longrightarrow M(B[t]),$$ 
with the induced isomorphism (modulo prime radicals): 
$$
\overline{\omega}\colon Z(A)/N(Z(A))[\overline{s}]
\longrightarrow Z(B)/N(Z(B))[\overline{t}],
$$
implies that 
$$Z(B)/N(Z(B))[\overline{t}]=Z(B)/N(Z(B))
[\overline{\omega}(\overline{s})].$$ 
\item[(2)]
We say $A$ is {\it strongly reduced Morita $Z$-detectable} or simply
{\it SRMZ-detectable} if, for each $n\geq 1$ and any algebra $B$, any 
equivalence of abelian categories 
$$\mathcal{E}\colon M(A[s_{1}, \cdots, s_{n}])
\longrightarrow M(B[t_{1}, \cdots, t_{n}])$$ 
implies that 
$$
Z(B)/N(Z(B))[\overline{t}_{1}, \cdots, \overline{t}_{n}]=
Z(B)/N(Z(B))[\overline{\omega}(\overline{s}_{1}), \cdots, 
\overline{\omega}(\overline{s}_{n})].
$$
\end{enumerate}
\end{definition}

Several retractabilities are defined in \cite{LeWZ, LuWZ}. It has been observed in 
\cite{BR, LeWZ, LuWZ} that the cancellative property of an algebra $A$ is controlled 
by its center $Z(A)$ to a large degree. In the rest of this section, we establish or recall 
some basic facts. In Section $2$, we will show that there is a Morita analogue of 
\cite[Theorem 1]{BR} and \cite[Theorem 4.2]{LeWZ} can be strengthened. 

The following result is essentially verified in the proof of \cite[Theorem 1]{BR}, 
see \cite[pp. 485--486]{BR}, and in the proof of 
\cite[Statement $\#4$, pp. 334--335]{EK}. For reader's 
convenience, we recall it as a lemma and reproduce its proof as follows.

\begin{lemma} \cite[Theorem 1]{BR}
\label{xxlem1.5}
Suppose that $A$ and $B$ are commutative algebras. Let 
$$\sigma \colon A[s_{1}, \cdots, s_{n}]\longrightarrow 
B[t_{1}, \cdots, t_{n}]$$ be an isomorphism of algebras such that 
the induced isomorphism modulo prime radicals, denoted by
\[
\overline{\sigma} 
\colon A/N(A)[\overline{s}_{1}, \cdots, \overline{s}_{n}]
\longrightarrow B/N(B)[\overline{t}_{1}, \cdots, \overline{t}_{n}]
\]
has the property that 
$$B/N(B)[\overline{t}_{1}, \cdots, \overline{t}_{n}]=
B/N(B)\{\overline{f}_{1}, \cdots, \overline{f}_{n}\}$$ 
where $f_{i}=\sigma(s_{i})$ for $i=1, \cdots, n$. Then 
$$B[t_{1}, \cdots, t_{n}]=B\{f_{1}, \cdots, f_{n}\}
=B[f_{1}, \cdots, f_{n}]$$
where $f_{1}, \cdots, f_{n}$ are considered as 
commutative indeterminates over $B$.
\end{lemma}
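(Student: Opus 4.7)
The plan is to reduce this to a Nakayama-style iteration. All that is actually needed about $\sigma$ is that the elements $f_1=\sigma(s_1),\ldots,f_n=\sigma(s_n)$ of $R:=B[t_1,\ldots,t_n]$ have reductions modulo $N(B)R$ which generate $(B/N(B))[\overline{t}_1,\ldots,\overline{t}_n]$ as a $B/N(B)$-algebra; this is precisely the hypothesis on $\overline{\sigma}$.

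First I would use this to choose, for each $j$, a polynomial $g_j\in B[Y_1,\ldots,Y_n]$ such that $h_j:=t_j-g_j(f_1,\ldots,f_n)$ lies in $N(B)R$. Each $h_j$ is then a polynomial in $t_1,\ldots,t_n$ whose coefficients all lie in $N(B)$. Only finitely many such coefficients appear altogether as $j$ varies; let $J\subseteq B$ be the ideal they generate. Because $J$ is finitely generated and every generator is nilpotent, a standard pigeonhole argument gives $J^N=0$ for some $N\geq 1$.

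Next, let $M:=B\{f_1,\ldots,f_n\}$ denote the $B$-subalgebra of $R$ generated by $f_1,\ldots,f_n$. The decomposition $t_j=g_j(f_1,\ldots,f_n)+h_j$ shows $R=M+JR$. Since $J\subseteq B\subseteq M$ we have $JM\subseteq M$, so the relation iterates:
$$R=M+JR=M+J(M+JR)=M+J^2R=\cdots=M+J^NR=M.$$
Thus $f_1,\ldots,f_n$ generate $R$ as a $B$-algebra. Applying Lemma \ref{xxlem1.2}(2) to the commutative ring $B$ and these $B$-generators of the polynomial ring $R=B[t_1,\ldots,t_n]$ then yields $B\{f_1,\ldots,f_n\}=B[f_1,\ldots,f_n]$ with the $f_j$ algebraically independent over $B$, completing the proof.

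The main subtle point is the finiteness/nilpotency observation in Step 2: although $N(B)$ need not itself be a nilpotent ideal when $B$ is non-noetherian, only finitely many of its elements appear as coefficients of $h_1,\ldots,h_n$, and the ideal they generate is therefore automatically nilpotent. Once this is in hand, the rest is a clean Nakayama-type descending argument, essentially the trick carried out in \cite{BR} and \cite{EK}.
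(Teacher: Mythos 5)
Your proposal is correct and follows essentially the same route as the paper's proof: the decomposition $t_j=g_j(f_1,\dots,f_n)+h_j$ with $h_j\in N(B)[t_1,\dots,t_n]$, the observation that the ideal generated by the finitely many coefficients of the $h_j$ is nilpotent, the iterated absorption $R=M+J^mR$, and the final appeal to Lemma \ref{xxlem1.2}(2). The only difference is cosmetic (you make the Nakayama-style iteration and the pigeonhole nilpotency argument explicit where the paper says ``by induction'').
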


In essence, Lemma \ref{xxlem1.5} implies that a certain 
detectability lifts from $A/N(A)$ to $A$. 

\begin{proof}[Proof of Lemma \ref{xxlem1.5}] 
Since $B/N(B))[\overline{t}_{1}, \cdots, \overline{t}_{n}]=
B/N(B)\{\overline{f}_{1}, \cdots, \overline{f}_{n}\}$, there 
are polynomials in $B$, say $g_1, \cdots, g_n$, such that 
\[
\overline{t}_{i}=
\overline{g}_{i}(\overline{f}_{1}, \cdots, \overline{f}_{n}).
\]
As a result, for $i=1, \cdots, n$, we have the following
\[
t_{i}=g_{i}(f_{1}, \cdots, f_{n})+h_{i}(t_{1}, \cdots, t_{n})
\]
where $h_{i}\in N(B[t_{1},\cdots, t_{n}])=
N(B)[t_{1}, \cdots, t_{n}]$. Denote by $N_{0}$ the ideal of $B$ 
generated by the coefficients of $h_{1}, \cdots, h_{n}$. Then, 
by induction, we have that 
\[
B[t_{1}, \cdots, t_{n}]=B\{f_{1}, \cdots, f_{n}\}
+N_{0}^{m}B[t_{1}, \cdots, t_{n}]
\]
for each $m\geq 1$. Since $N_{0}$ is a finitely generated 
ideal of $B$ and $N_{0}$ is contained in $N(B)$, the 
prime radical of $B$, we have that $N_{0}$ is nilpotent. 
As a result, we have that 
\[
B[t_{1}, \cdots, t_{n}]=B\{f_{1}, \cdots, f_{n}\}.
\]
Using Lemma \ref{xxlem1.2}(2), we conclude that 
$$B[t_{1}, \cdots, t_{n}]=B[f_{1}, \cdots, f_{n}]$$ 
where the elements $f_{1}, \cdots, f_{n}$ are regarded as 
commutative indeterminates over $B$.
\end{proof}

We now state a couple of easy facts about detectability.

\begin{lemma}
\label{xxlem1.6}
Let $Z$ be the center of an algebra $A$.
\begin{enumerate}
\item[(1)]
If $Z$ is strongly retractable, then $A$ is strongly Morita 
$Z$-retractable, and consequently, SMZ-detectable. 
\item[(2)]
Suppose that $Z/N(Z)$ is strongly retractable. Then $A$ is strongly 
reduced Morita $Z$-retractable. As a consequence, $A$ is 
SMZ-detectable. 
\end{enumerate}
\end{lemma}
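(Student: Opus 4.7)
The plan is to use the automatic isomorphism of centers induced by Morita equivalence to reduce each statement to a purely commutative retractability hypothesis, and then to lift reduced detectability to detectability using Lemma \ref{xxlem1.5}.

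For (1), start from an equivalence of abelian categories $\mathcal{E}\colon M(A[s_1,\dots,s_n])\to M(B[t_1,\dots,t_n])$. By the standard Morita theory quoted in \eqref{E1.2.1}, $\mathcal{E}$ produces an algebra isomorphism
\[
\omega\colon Z(A[s_1,\dots,s_n])=Z[s_1,\dots,s_n]\;\longrightarrow\; Z(B)[t_1,\dots,t_n]=Z(B[t_1,\dots,t_n]).
\]
Since $Z$ is strongly retractable, the isomorphism $\omega$ must send $Z$ onto $Z(B)$; this is by definition the strongly Morita $Z$-retractable property. Consequently, $Z(B)[t_1,\dots,t_n]$ is generated as a $\Bbbk$-algebra by $\omega(Z)=Z(B)$ together with $\omega(s_1),\dots,\omega(s_n)$, so $Z(B)[t_1,\dots,t_n]=Z(B)\{\omega(s_1),\dots,\omega(s_n)\}$, which by Lemma \ref{xxlem1.2}(2) upgrades to $Z(B)[t_1,\dots,t_n]=Z(B)[\omega(s_1),\dots,\omega(s_n)]$. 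This is precisely the SMZ-detectability condition of Definition \ref{xxdef1.3}(2).

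For (2), apply $\omega$ as above, and note that for any commutative algebra $C$ one has $N(C[x_1,\dots,x_n])=N(C)[x_1,\dots,x_n]$; in particular $N(Z[s_1,\dots,s_n])=N(Z)[s_1,\dots,s_n]$, and similarly on the $B$ side. Hence $\omega$ descends to a well-defined algebra isomorphism
\[
\overline{\omega}\colon (Z/N(Z))[\overline{s}_1,\dots,\overline{s}_n]\;\longrightarrow\; (Z(B)/N(Z(B)))[\overline{t}_1,\dots,\overline{t}_n].
\]
By strong retractability of $Z/N(Z)$, we have $\overline{\omega}(Z/N(Z))=Z(B)/N(Z(B))$, yielding the strongly reduced Morita $Z$-retractable property and immediately the SRMZ-detectability condition of Definition \ref{xxdef1.4}(2).

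Finally, to upgrade SRMZ-detectability to SMZ-detectability, invoke Lemma \ref{xxlem1.5} with $\sigma=\omega|_{Z[s_1,\dots,s_n]}$, $f_i=\omega(s_i)$, and the commutative algebras $Z$ and $Z(B)$. The reduced statement just obtained verifies the hypothesis of Lemma \ref{xxlem1.5}, whose conclusion is
\[
Z(B)[t_1,\dots,t_n]=Z(B)\{\omega(s_1),\dots,\omega(s_n)\}=Z(B)[\omega(s_1),\dots,\omega(s_n)],
\]
which is exactly SMZ-detectability. The only subtle point in the argument is checking that $\omega$ really descends modulo nilradicals, i.e.\ $\omega$ maps $N(Z)[s_1,\dots,s_n]$ onto $N(Z(B))[t_1,\dots,t_n]$; this is handled cleanly because an algebra isomorphism preserves nilradicals, and the nilradical of a polynomial ring over a commutative ring is the nilradical of the base extended by the indeterminates. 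Everything else is a direct application of the definitions together with Lemmas \ref{xxlem1.2} and \ref{xxlem1.5}.
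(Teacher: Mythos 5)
Your proof is correct and follows essentially the same route as the paper: the paper's part (1) simply defers to [LeWZ, Definition 2.6(4)] and the proof of [LeWZ, Lemma 3.4] for exactly the argument you spell out (Morita equivalence induces $\omega$ on centers, strong retractability forces $\omega(Z)=Z(B)$, and surjectivity plus Lemma \ref{xxlem1.2} yields detectability), and its part (2) likewise passes to the reduction modulo nilradicals and lifts back via Lemma \ref{xxlem1.5}. Your version just makes explicit the details the paper outsources to the citations, including the standard fact $N(C[x_1,\dots,x_n])=N(C)[x_1,\dots,x_n]$ needed for $\omega$ to descend.
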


\begin{proof}
(1) The first assertion follows from \cite[Definition 2.6(4)]{LeWZ}.
For the second assertion, see the proof of \cite[Lemma 3.4]{LeWZ}.

(2) The first statement is part (1). By part (1), 
$A$ is strongly reduced Morita detectable. By Lemma \ref{xxlem1.5},
$A$ is strongly Morita $Z$-detectable, or SMZ-detectable.
\end{proof}

\begin{lemma}
\label{xxlem1.7}
Let $Z$ be the center of an algebra $A$.
\begin{enumerate}
\item[(1)]
Suppose that $A$ is either strongly Morita $Z$-retractable or 
strongly reduced Morita $Z$-retractable. Then $A$ is SMZ-detectable. 
\item[(2)] \cite[Theorem 1.2]{As}
If $A$ is reduced, then $A$ is SMZ-detectable if 
and only if $A$ is strongly Morita $Z$-retractable. 
\end{enumerate}
\end{lemma}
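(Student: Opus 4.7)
The plan is to unpack the retractability hypotheses and reduce SMZ-detectability to the single polynomial-ring identity $Z(B)[t_1,\ldots,t_n] = Z(B)[\omega(s_1),\ldots,\omega(s_n)]$ demanded by Definition \ref{xxdef1.3}(2), using Lemma \ref{xxlem1.2}(2) and Lemma \ref{xxlem1.5} as black boxes. Throughout I will fix an equivalence $\mathcal{E}\colon M(A[s_1,\ldots,s_n]) \to M(B[t_1,\ldots,t_n])$ and write $\omega\colon Z(A)[s_1,\ldots,s_n] \to Z(B)[t_1,\ldots,t_n]$ for the induced center isomorphism of \eqref{E1.2.1}.

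For part (1), in the strongly Morita $Z$-retractable case the hypothesis delivers $\omega(Z(A)) = Z(B)$ directly. Since $\omega$ is surjective onto the polynomial ring $Z(B)[t_1,\ldots,t_n]$, the images $\omega(s_1),\ldots,\omega(s_n)$ then $Z(B)$-generate that polynomial ring, and Lemma \ref{xxlem1.2}(2) promotes this generation to the required polynomial presentation. In the strongly reduced Morita $Z$-retractable case, the same reasoning applied at the level of reduced centers yields the $\{\}$-generation hypothesis of Lemma \ref{xxlem1.5}; that lemma lifts the polynomial identification from $Z(B)/N(Z(B))$ to $Z(B)$ itself, after which Lemma \ref{xxlem1.2}(2) again closes the argument.

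For part (2), the implication from strongly Morita $Z$-retractable to SMZ-detectable is already contained in part (1). For the converse, assuming $A$ is reduced, I first observe that $Z(A) \subseteq A$ is reduced, so via $\omega$ both the polynomial ring $Z(B)[t_1,\ldots,t_n]$ and its coefficient subring $Z(B)$ are reduced. SMZ-detectability now supplies the identity $Z(B)[t_1,\ldots,t_n] = Z(B)[\omega(s_1),\ldots,\omega(s_n)]$, which exhibits $\omega(Z(A))$ and $Z(B)$ as two coefficient subrings of the same reduced polynomial ring in the same set of variables. At this point \cite[Theorem 1.2]{As} identifies these two coefficient subrings, producing $\omega(Z(A)) = Z(B)$; this is exactly strong Morita $Z$-retractability.

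The only real obstacle is bookkeeping: I must verify that the center isomorphism $\omega$ from \eqref{E1.2.1} genuinely descends to the reduced-centers $\overline{\omega}$ appearing in Definition \ref{xxdef1.4}, so that the input hypothesis of Lemma \ref{xxlem1.5} is available in the second case of part (1), and that the reducedness of $A$ actually transfers through the Morita equivalence to $Z(B)$ in part (2). Once these compatibilities are checked, neither step requires any computation beyond citing \cite[Theorem 1.2]{As}, Lemma \ref{xxlem1.5}, and Lemma \ref{xxlem1.2}(2).
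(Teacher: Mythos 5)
Your proposal follows essentially the same route as the paper: part (1) is exactly the combination of Lemma \ref{xxlem1.6} with Lemmas \ref{xxlem1.5} and \ref{xxlem1.2}(2), and in part (2) the forward direction is part (1) while the converse transports reducedness from $Z(A)$ to $Z(B)$ via $\omega$ and then falls back on Asanuma. The one genuine soft spot is the last step of part (2). You conclude $\omega(Z(A))=Z(B)$ by citing \cite[Theorem 1.2]{As} as a statement ``identifying two coefficient subrings,'' but that theorem is an equivalence between two properties of a single ring, each quantified over \emph{all} polynomial-ring isomorphisms; here SMZ-detectability only supplies the detection identity $Z(B)[t_1,\cdots,t_n]=Z(B)[f_1,\cdots,f_n]$ for the particular isomorphisms $\omega$ induced by Morita equivalences, which is a strictly smaller class, so the theorem does not apply off the shelf. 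What is actually needed --- and what the paper supplies by reproducing Asanuma's argument --- is the per-isomorphism fact that if $R[X_1,\cdots,X_n]=S[X_1,\cdots,X_n]$ with the $X_i$ indeterminates over both $R=\omega(Z(A))$ and $S=Z(B)$ and with $S$ reduced, then $R=S$: write $\alpha\in R$ as a polynomial in $X_k$ over $S_k=S[X_1,\cdots,X_{k-1},X_{k+1},\cdots,X_n]$, use $S_k[X_k]=S_k[X_k+\alpha]=S_k[X_k+\alpha^2]$ to force the coefficients of the positive powers of $X_k$ to be nilpotent and hence zero, intersect over $k$ to get $R\subseteq S$, and finish with \cite[Lemma 2]{BR}. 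You have set up exactly the data this argument consumes, so the defect is one of citation rather than of strategy, but as written that step is not justified.
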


\begin{proof} (1) It follows from Lemma \ref{xxlem1.6}.

(2) If $A$ is strongly Morita $Z$-retractable, by the proof of 
\cite[Lemma 3.4]{LeWZ}, $A$ is SMZ-detectable. The converse statement
follows from the proof of \cite[Theorem 1.2]{As} which 
we repeat next.

Suppose $B$ is another algebra such that 
\[
\mathcal{E}\colon M(A[s_{1}, \cdots, s_{n}])
\longrightarrow M(B[t_{1}, \cdots, t_{n}])
\]
is an equivalence of abelian categories. Let 
$$\omega \colon Z(A)[s_{1}, \cdots, s_{n}]
\longrightarrow Z(B)[t_{1}, \cdots, t_{n}]$$ 
be the corresponding induced isomorphism given 
in \eqref{E1.2.1}. Denote by $f_{i}$ the element 
$\omega(s_{i})\in Z(B)[t_{1}, \cdots, t_{n}]$ for 
$i=1, \cdots, n$. Since $A$ 
is SMZ-detectable, by definition, 
$$Z(B)[t_{1}, \cdots, t_{n}]
=Z(B)[f_{1}, \cdots, f_{n}].$$ 
As a consequence, we have that 
$$\omega(Z(A))[f_{1}, \cdots, f_{n}]=
\omega(Z(A)[s_1,\cdots,s_n])
=Z(B)[t_{1}, \cdots, t_{n}]=
Z(B)[f_{1}, \cdots, f_{n}].$$ 
Now we need to show that 
$\omega(Z(A))=Z(B)$. To simplify the notation, we will 
denote $\omega(Z(A))$ by $R$ and $Z(B)$ by $S$ respectively, 
and $f_{i}$ by $X_{i}$ instead. Set 
$S_{k}=S[X_{1}, \cdots, X_{k-1}, X_{k+1}, \cdots, X_{n}]$ 
for $k=1, \cdots, n$. Then $S[X_{1}, \cdots, X_{n}]=S_{k}[X_{k}]$ 
is a polynomial algebra in a single indeterminate $X_{k}$ over $S_{k}$. 
Note that any element $\alpha$ of $R=\omega(Z(A))$ can be written 
in the following form: 
\[
\alpha=\beta_{0}+\beta_{1}X_{k}+\cdots + \beta_{m} X_{k}^{m}
\]
where $\beta_{i} \in S_{k}$. Suppose that 
$f(X_{k})=\gamma_{0}+\gamma_{1}X_{k}+\cdots \gamma_{l} X_{k}^{l}$ 
is a polynomial in $S_{k}[X_{k}]$ such that 
$S_{k}[X_{k}]=S_{k}[f(X_{k})]$. Then it is true that 
$\gamma_{1}$ is a unit and $\gamma_{2}, \cdots, \gamma_{l}$ 
are nilpotent elements of $S_{k}$. Since $A$ is reduced, 
its center $Z(A)$ is reduced. Then $R=\omega(Z(A))$ is reduced as well. 
As a result, $S=Z(B)$ is reduced. Thus, $S_{k}$ is reduced too. 
We have that $\gamma_{2}=\cdots=\gamma_{l}=0$. Note that 
$$\begin{aligned}
R[X_{1}, \cdots, X_{n}]&=
R[X_{1}, \cdots, X_{k-1}, X_{k}+\alpha, X_{k+1}, \cdots, X_{n}]\\
&=R[X_{1}, \cdots, X_{k-1}, X_{k}+\alpha^{2}, 
X_{k+1}, \cdots, X_{n}].
\end{aligned}
$$ 
As a result, we have that 
$$S_{k}[X_{k}]=S_{k}[X_{k}+\alpha]=S_{k}[X_{k}+\alpha^{2}]$$
which implies that $\beta_{1}, \cdots, \beta_{m}$ are nilpotent 
elements of $S_{k}$ and thus equal to zero. So we have that 
$\alpha\in S_{k}$ for $k=1, \cdots, n$. Note that $\bigcap_{k=1}^{n} S_{k}=S$. 
So we have proved that $\alpha \in S$ as desired. Note that 
$R\subseteq S$ and $R[X_{1}, \cdots, X_{n}]=S[X_{1}, \cdots, X_{n}]$ 
can imply that $R=S$ by \cite[Lemma 2]{BR}. 
\end{proof}

\begin{lemma}
\label{xxlem1.8}
An algebra $A$ is SMZ-detectable if and only if it is SRMZ-detectable.
\end{lemma}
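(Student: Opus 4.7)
The plan is to prove the two implications separately; the forward direction is essentially formal, and the reverse direction is a direct application of Lemma \ref{xxlem1.5}.

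For the forward direction (SMZ $\Rightarrow$ SRMZ), I would start with an equivalence $\mathcal{E}\colon M(A[s_{1},\ldots,s_{n}])\to M(B[t_{1},\ldots,t_{n}])$ and the induced center isomorphism $\omega\colon Z(A)[s_{1},\ldots,s_{n}]\to Z(B)[t_{1},\ldots,t_{n}]$. By hypothesis, $Z(B)[t_{1},\ldots,t_{n}]=Z(B)[\omega(s_{1}),\ldots,\omega(s_{n})]$. Since nilradicals commute with polynomial extensions over commutative rings, i.e.\ $N(Z(B)[t_{1},\ldots,t_{n}])=N(Z(B))[t_{1},\ldots,t_{n}]$, reducing modulo nilradicals yields the SRMZ-detectable identity $Z(B)/N(Z(B))[\overline{t}_{1},\ldots,\overline{t}_{n}]=Z(B)/N(Z(B))[\overline{\omega}(\overline{s}_{1}),\ldots,\overline{\omega}(\overline{s}_{n})]$.

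For the reverse direction (SRMZ $\Rightarrow$ SMZ), the key observation is that $\omega$ restricts to an isomorphism of the commutative algebras $Z(A)[s_{1},\ldots,s_{n}]\cong Z(B)[t_{1},\ldots,t_{n}]$. Setting $f_{i}=\omega(s_{i})\in Z(B)[t_{1},\ldots,t_{n}]$ and $A':=Z(A)$, $B':=Z(B)$, I am in exactly the setting of Lemma \ref{xxlem1.5}: the SRMZ hypothesis provides the assumption $B'/N(B')[\overline{t}_{1},\ldots,\overline{t}_{n}]=B'/N(B')\{\overline{f}_{1},\ldots,\overline{f}_{n}\}$, and Lemma \ref{xxlem1.5} then upgrades this to $Z(B)[t_{1},\ldots,t_{n}]=Z(B)\{f_{1},\ldots,f_{n}\}=Z(B)[f_{1},\ldots,f_{n}]$, which is precisely the SMZ-detectable conclusion.

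The main (but genuinely minor) obstacle is a bookkeeping one: verifying that the reduced induced isomorphism $\overline{\omega}$ appearing in Definition \ref{xxdef1.4}(2) really is the one obtained from $\omega$ by passing to the quotient by $N(Z(B))[t_{1},\ldots,t_{n}]$, so that the element $\overline{\omega}(\overline{s}_{i})$ equals $\overline{f}_{i}$ where $f_{i}=\omega(s_{i})$. This follows immediately from the equality $N(Z(\cdot)[\mathbf{x}])=N(Z(\cdot))[\mathbf{x}]$ together with the functoriality of taking reduced quotients. Once this is in place, no further work is needed, since Lemma \ref{xxlem1.5} has already done the heavy lifting of the nilpotent-lifting argument.
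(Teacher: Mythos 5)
Your proposal is correct and follows essentially the same route as the paper: the forward direction reduces the SMZ identity modulo the nilradical (using $N(Z(B)[t_1,\dots,t_n])=N(Z(B))[t_1,\dots,t_n]$), and the reverse direction applies Lemma \ref{xxlem1.5} with the commutative algebras $Z(A)$ and $Z(B)$ to lift the reduced identity back up. The paper's proof is terser on the reverse implication (``follows from the reversed argument and Lemma \ref{xxlem1.5}''), but your spelled-out version is exactly what is intended.
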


\begin{proof}
Suppose that the algebra $A$ is SMZ-detectable and let 
\[
\mathcal{E}\colon M(A[s_{1}, \cdots, s_{n}])
\longrightarrow M(B[t_{1}, \cdots, t_{n}])
\]
be an equivalence of abelian categories where $B$ is another 
algebra. Note that the equivalence $\mathcal{E}$ induces an 
algebra isomorphism 
$$\omega \colon Z(A)[s_{1}, \cdots, s_{n}]\longrightarrow 
Z(B)[t_{1}, \cdots, t_{n}],$$ 
as in \eqref{E1.2.1}. Since $A$ is SMZ-detectable, 
we have that 
$$Z(B)[t_{1}, \cdots, t_{n}]=Z(B)[f_{1}, \cdots, f_{n}]$$ 
where $f_{i}=\omega(s_{i})$ for $i=1, \cdots, n$. Modulo both 
sides by the nil-radical, we obtain that
$$Z(B)/N(Z(B))[\overline{t}_{1}, \cdots, \overline{t}_{n}]
=Z(B)/N(Z(B))[\overline{f}_{1}, \cdots, \overline{f}_{n}].$$ 
By definition, $A$ is SRMZ-detectable. The other implication 
follows from the reversed argument and Lemma \ref{xxlem1.5}.
\end{proof}

However, there exists a commutative algebra which is 
SRMZ-retractable, but not SMZ-retractable. The following 
example is borrowed from \cite[Example 1]{As}, see also 
\cite[Example 3.3]{LeWZ}.

\begin{example}
\label{xxex1.9}
Let $A=k[x,y]/(x^{2}, y^{2}, xy)$. Then $A$ is SRMZ-retractable. 
Furthermore, it is SMZ-detectable and SRMZ-detectable, 
but neither strongly retractable nor SMZ-retractable.
\end{example}

\section{Morita Cancellation}
\label{xxsec2}

This section concerns Morita cancellative properties.
We also prove some of the results stated in the introduction.
The first result, namely, Theorem \ref{xxthm0.4}, is about 
universally Morita cancellation whose proof is essentially 
adopted from \cite{BZ1}. Let $\GKdim A$ denote the 
Gelfand-Kirillov dimension of an algebra $A$. We refer 
the reader to \cite{KL} and Section \ref{xxsec3} for 
the basic definitions and properties of Gelfand-Kirillov 
dimension. 

\begin{proof}[Proof of Theorem \ref{xxthm0.4}]
Let $B$ be another algebra. Let $R$ be an affine commutative 
domain with an ideal $I\subset R$ such that $R/I=\Bbbk$. Suppose that 
$$\mathcal{E}\colon M(A\otimes_{\Bbbk} R)\longrightarrow M(B\otimes_{\Bbbk} R)$$ 
is an equivalence of abelian categories. By \eqref{E1.2.1}, 
$\mathcal{E}$ induces an isomorphism 
$$\omega \colon Z(A\otimes_{\Bbbk} R)\cong Z(B\otimes_{\Bbbk} R)$$ 
between the centers. Since $Z(A)=\Bbbk$, we obtain that 
$$R=Z(A)\otimes_{\Bbbk}R=Z(A\otimes_{\Bbbk} R)\cong 
Z(B\otimes_{\Bbbk} R)=Z(B)\otimes_{\Bbbk} R.$$ 
As a result, we have that $R\cong Z(B)\otimes_{\Bbbk} R$. 
In particular, $Z(B)$ is a commutative domain. Due to a 
consideration of the GKdimension, we see that $\GKdim Z(B)=0$, 
regarded as a $\Bbbk$-algebra. Thus $Z(B)$ is indeed a field. 
We have that $Z(B)=\Bbbk$ due to the fact that there is an 
ideal $I\subset R$ such that $R/I=\Bbbk$. Consequently, we 
have that $Z(B\otimes_{\Bbbk} R)=R$. As a result, $\omega$ 
is an isomorphism from $R\longrightarrow R$ which implies 
that $R/\omega(I)=\Bbbk$. Note that 
$A\cong (A\otimes_{\Bbbk} R)/I$ is Morita equivalent to 
$(B\otimes_{\Bbbk} R)/(\omega(I))\cong B$ \cite[Lemma 2.1(5)]{LuWZ}. 
Thus, we have proved that $A$ is universally Morita cancellative.
\end{proof}

The following result is a re-statement of Theorem \ref{xxthm0.5}.
It is an analogue of \cite[Theorem 1]{BR} and serves as an 
improvement of \cite[Theorem 4.2]{LeWZ}. Note that our result 
does not require the strongly Hopfian assumption. We should 
mention that \cite[Theorem 1]{BR} deals with the cancellation 
problem in the category of rings; but the idea of its proof 
carries over word in word for $\Bbbk$-algebras.
\begin{theorem}
\label{xxthm2.1}
Let $A$ be an algebra with center $Z$. Suppose either
\begin{enumerate}
\item[(1)]
$Z$ or $Z/N(Z)$ is strongly retractable, or
\item[(2)]
$Z$ or $Z/N(Z)$ is strongly detectable.
\end{enumerate}
Then $Z$ and $A$ are strongly cancellative and strongly 
Morita cancellative.
\end{theorem}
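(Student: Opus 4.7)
The plan is to reduce each of the four hypothesis variants to a single intermediate claim, namely that $A$ is strongly Morita $Z$-detectable (SMZ-detectable), together with its classical analog for strong cancellation, and then to invoke the Brewer-Rutter isomorphism (Lemma \ref{xxlem1.2}) to descend to the base algebras.

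First, I would establish the reduction to detectability. If $Z$ is strongly retractable, Lemma \ref{xxlem1.6}(1) directly yields that $A$ is SMZ-detectable. If instead $Z/N(Z)$ is strongly retractable, Lemma \ref{xxlem1.6}(2) gives the same conclusion, with the lifting from the reduced quotient $Z/N(Z)$ to $Z$ handled by Lemma \ref{xxlem1.5}. The ``strongly detectable'' cases are handled analogously: by Lemma \ref{xxlem1.8}, SMZ-detectability is equivalent to SRMZ-detectability, and this equivalence, combined with Lemma \ref{xxlem1.5}, bridges the $Z/N(Z)$-level and $Z$-level detectability hypotheses. The classical (non-Morita) analogs are simpler, since any algebra isomorphism $A[s_1,\ldots,s_n]\cong B[t_1,\ldots,t_n]$ directly restricts to an isomorphism of centers without invoking the Morita center correspondence \eqref{E1.2.1}.

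Next, assuming SMZ-detectability, I would prove strong Morita cancellation. Given an equivalence $\mathcal{E}\colon M(A[s_1,\ldots,s_n])\cong M(B[t_1,\ldots,t_n])$, the induced center isomorphism $\omega\colon Z(A)[s_1,\ldots,s_n]\to Z(B)[t_1,\ldots,t_n]$ from \eqref{E1.2.1} satisfies, by SMZ-detectability, $Z(B)[t_1,\ldots,t_n]=Z(B)[\omega(s_1),\ldots,\omega(s_n)]$. Applying Lemma \ref{xxlem1.2}(1) to these central generators, the $B$-endomorphism of $B[t_1,\ldots,t_n]$ sending $t_i\mapsto\omega(s_i)$ is an algebra isomorphism; hence $\omega(s_1),\ldots,\omega(s_n)$ may be regarded as free polynomial indeterminates over $B$. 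The two-sided central ideal $I=(s_1,\ldots,s_n)$ of $A[s_1,\ldots,s_n]$ has quotient $A$; by the Morita ideal correspondence compatible with $\omega$ (as in \cite[Lemma 1.2]{LuWZ}), $I$ matches the ideal $(\omega(s_1),\ldots,\omega(s_n))$ of $B[t_1,\ldots,t_n]$, whose quotient is $B$. Thus $\mathcal{E}$ descends to an equivalence $M(A)\cong M(B)$. For strong cancellation, the same argument runs with an algebra isomorphism replacing the Morita equivalence; this is essentially the proof of \cite[Theorem 1]{BR}.

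The main obstacle I anticipate is the descent step in the second phase: verifying that the central ideal $(s_1,\ldots,s_n)\subset A[s_1,\ldots,s_n]$ corresponds under the Morita equivalence precisely to the central ideal $(\omega(s_1),\ldots,\omega(s_n))\subset B[t_1,\ldots,t_n]$, and that the induced equivalence between quotient module categories is genuinely $M(A)\cong M(B)$. This requires careful use of the compatibility between $\mathcal{E}$, the center isomorphism $\omega$, and the bijection between two-sided ideals recorded in \cite[Lemma 1.2]{LuWZ}. Once this compatibility is made explicit, the remaining work is routine polynomial-ring bookkeeping.
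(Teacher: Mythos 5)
Your proposal is correct and follows essentially the same route as the paper: reduce all four hypothesis variants to SMZ-detectability via Lemmas \ref{xxlem1.5}, \ref{xxlem1.6} and \ref{xxlem1.8}, then use the center isomorphism $\omega$ from \eqref{E1.2.1} together with Lemma \ref{xxlem1.2} to identify the ideal $(s_1,\ldots,s_n)$ with $(\omega(s_1),\ldots,\omega(s_n))$ and descend the equivalence to $M(A)\cong M(B)$ by the ideal correspondence of \cite[Lemma 1.2]{LuWZ}. The descent step you flag as the main obstacle is handled in the paper exactly as you outline, so no gap remains.
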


\begin{proof} For the assertions concern $Z$, it suffices to
take $A=Z$. So it is enough to prove the assertions for 
$A$. We only prove that $A$ is strongly Morita 
cancellative. The proof of strongly cancellative property is 
similar, and therefore is omitted.

Under the hypothesis of (1), by Lemma \ref{xxlem1.6}, $A$ is 
SMZ-detectable. If $Z$ is strongly detectable (part of the
hypothesis in (2)), it is clear that $A$ is SMZ-detectable. 
If $Z/N(Z)$ is strongly detectable, by Lemma \ref{xxlem1.5},
$Z$ is strongly detectable. Therefore in all cases, we 
conclude that $A$ is SMZ-detectable. 

Let 
$$\mathcal{E}\colon M(A[s_{1}, \cdots, s_{n}])\longrightarrow 
M(B[t_{1}, \cdots, t_{n}])$$ be an equivalence of abelian 
categories. Then $\mathcal{E}$ induces an algebra isomorphism 
$$\omega \colon Z(A)[s_{1}, \cdots, s_{n}]\longrightarrow 
Z(B)[t_{1}, \cdots, t_{n}].$$ 

Since $A$ is SMZ-detectable, $Z(B)[t_{1}, \cdots, t_{n}]=
Z(B)[f_{1}, \cdots, f_{n}]$ where $f_i=\omega(s_i)$ for 
$i=1,\cdots,n$. Let $I$ be the ideal of $A[s_{1}, \cdots, s_{n}]$ 
generated by $s_{1}, \cdots, s_{n}$. Then $\omega(I)
=B[t_{1}, \cdots, t_{n}]
(f_{1}, \cdots, f_{n})=B[f_{1}, \cdots, f_{n}](f_{1}, \cdots, f_{n})$. 
As a result, we have that $A\cong A[s_{1}, \cdots, s_{n}]/(A[s_{1}, \cdots, s_{n}]I)$ 
which is Morita equivalent to $B[f_{1}, \cdots, f_{n}]/(B[f_{1}, \cdots, f_{n}]
\omega(I))\cong B$. That is, $A$ is Morita equivalent to $B$. 
Therefore, $B$ is strongly Morita cancellative. 
\end{proof}

Next we mention some easy consequences.

\begin{corollary}
\label{xxcor2.2}
Let $A$ be an algebra with a center $Z$. Suppose one of the following holds.
\begin{enumerate}
\item[(1)]
Either $Z$ or $Z/N(Z)$ is an integral domain of transcendence degree one 
over a subfield of $Z$ and is not isomorphic to $\Bbbk^{\prime}[x]$ for 
any field extension $\Bbbk\subseteq \Bbbk^{\prime}\subseteq Z$.
\item[(2)]
$Z$ is an integral domain with nonzero Jacobson radical.
\end{enumerate}
Then $Z$ or $Z/N(Z)$ is strongly retractable. As a consequence, $A$ is 
strongly cancellative and strongly Morita cancellative.
\end{corollary}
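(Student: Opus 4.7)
The plan is to derive both parts of the corollary by simply verifying, in each case, that $Z$ or $Z/N(Z)$ is strongly retractable, and then invoking Theorem \ref{xxthm2.1}(1) to conclude that $Z$ and $A$ are both strongly cancellative and strongly Morita cancellative. In other words, the noncommutative and Morita-theoretic content of the corollary is already entirely packaged in Theorem \ref{xxthm2.1}; what remains is a purely commutative check about the ring $Z$ (respectively $Z/N(Z)$).

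For part (1), $Z$ or $Z/N(Z)$ is, by hypothesis, a commutative integral domain of transcendence degree one over a subfield $\Bbbk'\subseteq Z$, and is not isomorphic to a polynomial ring $\Bbbk''[x]$ over any intermediate field $\Bbbk\subseteq \Bbbk''\subseteq Z$. This is precisely the setting of the classical Abhyankar--Eakin--Heinzer cancellation theorem \cite{AEH}, which in its strong form asserts that such a domain is strongly retractable; the retraction step is also written out explicitly in \cite[pp.~485--486]{BR} and in \cite[Statement~\#4, pp.~334--335]{EK}. Feeding this into Theorem \ref{xxthm2.1}(1) finishes part (1).

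For part (2), $Z$ is by hypothesis a commutative integral domain, so it is reduced and $N(Z)=0$; in particular $Z=Z/N(Z)$. It therefore suffices to show that a commutative integral domain $Z$ with $J(Z)\neq 0$ is strongly retractable. This is a classical result of Eakin--Heinzer \cite{EH}: choosing $0\neq j\in J(Z)$, one shows, for any isomorphism $\phi\colon Z[s_1,\dots,s_n]\iso B[t_1,\dots,t_n]$, that $\phi$ must carry $Z$ onto $B$, using the unit characterization of the Jacobson radical combined with a Brewer--Rutter style lemma (compare Lemma \ref{xxlem1.2}). Strong retractability then again feeds into Theorem \ref{xxthm2.1}(1) to give the conclusion.

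The only real obstacle in writing this out is bibliographic: one has to confirm that the cited classical statements from \cite{AEH, BR, EK, EH} actually furnish \emph{strong} retractability (that is, retractability after adjoining arbitrarily many commuting polynomial variables at once), and not merely retractability in the one-variable case. Once that is confirmed, the corollary is an immediate application of Theorem \ref{xxthm2.1}, and no additional ring-theoretic or categorical argument is needed.
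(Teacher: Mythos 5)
Your proposal matches the paper's proof in both structure and substance: the paper likewise reduces everything to checking that $Z$ or $Z/N(Z)$ is strongly retractable and then invokes Theorem \ref{xxthm2.1}, handling (1) by citation to \cite[Example 2.2]{LeWZ} (which rests on the Abhyankar--Eakin--Heinzer theorem you name) and (2) by citation to \cite[Statement 1.10, p.~317]{AEH}. The only divergence is which classical reference is credited for each retractability fact, a purely bibliographic point that you yourself flag.
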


\begin{proof} The consequence follows form Theorem \ref{xxthm2.1}. It remains 
to show that $Z$ or $Z/N(Z)$ strongly retractable.

(1) This is \cite[Example 2.2]{LeWZ}.

(2) It follows from \cite[Statement 1.10 on Page 317]{AEH}.
\end{proof}

Now we prove Corollary \ref{xxcor0.6} below.

\begin{proof}[Proof of Corollary \ref{xxcor0.6}]
(1) It follows from the proof of 
\cite[Lemma 2.3]{LeWZ} that $Z$ is strongly retractable. 
The assertion follows from Theorem \ref{xxthm2.1}.

(2) By \cite[Theorem 2]{BR}, a von Neumann regular algebra $Z/N(Z)$ is 
strongly retractable. The assertion follows from Theorem 
\ref{xxthm2.1}. 

(3) By \cite[Theorem 3]{BR}, $Z$ is strongly retractable. 
The assertion follows from Theorem \ref{xxthm2.1}. 
\end{proof}

\begin{remark}
\label{xxrem2.3} Theorem \ref{xxthm2.1} and Corollary \ref{xxcor0.6} have many 
applications. Here is a partial list.
\begin{enumerate}
\item[(1)]
In view of Corollary \ref{xxcor0.6}(2), if $A$ is 
von Neumann regular, then the center $Z$ will also be
von Neumann regular. By Corollary \ref{xxcor0.6}(2),
$A$ is strongly cancellative and strongly Morita cancellative. 
\item[(2)]
If $Z$ is of Krull dimension zero, then $Z/N(Z)$ is
von Neumann regular. By Corollary \ref{xxcor0.6}(2)
again, $A$ is strongly cancellative and strongly Morita 
cancellative. 
\item[(3)]
If $A$ is a finite direct product of simple algebras, then 
$Z$ is a finite product of fields. Thus $Z$ has Krull dimension zero.
By the above comment, $A$ is strongly cancellative and strongly Morita cancellative. 
\item[(4)]
By the proof of \cite[Lemma 2.3]{LeWZ} or \cite[Statement 2.1, p. 320]{AEH},
the Laurent polynomial algebra $\Bbbk[x_{1}^{\pm 1}, \cdots, x_{m}^{\pm}]$ 
is strongly retractable. If $Z$ or $Z/N(Z)$ is isomorphic to the 
Laurent polynomial algebra, then $A$ is strongly cancellative and strongly 
Morita cancellative by Theorem \ref{xxthm2.1}.
\end{enumerate}
\end{remark}

We will explore some skew cancellative properties [Definitions
\ref{xxdef0.7} and \ref{xxdef0.8}] when the algebra $A$
has ``enough'' invertible elements in Section \ref{xxsec4}.

\section{GKdimension and Homological transcendence degree}
\label{xxsec3}

\subsection{GKdimension}
\label{xxsec3.1}
Let $A$ be an algebra over $\Bbbk$. The 
{\it Gelfand-Kirillov dimension} (or {\it GKdimension} for short) of
$A$ is defined to be
\begin{equation}
\notag
\GKdim A:=\sup_{V} \limsup_{n\to\infty} \left(\log_{n} 
(\dim_{\Bbbk} V^n)\right)
\end{equation}
where $V$ runs over all finite dimensional subspaces of $A$.
We refer the reader to \cite{KL} for more details. Next we prove or review
some preliminary results concerning the GKdimension of Ore extensions.

Let $\sigma$ be an automorphism of $A$. Recall that $\sigma$ is called 
{\it locally algebraic} if every finite dimensional subspace 
of $A$ is contained in a $\sigma$-stable finite dimensional subspace 
of $A$. If $A$ is affine, then $\sigma$ is locally algebraic if and only if 
there is a $\sigma$-stable finite dimensional generating subspace.

\begin{lemma}
\label{xxlem3.1} 
Let $A$ be an affine algebra over $\Bbbk$.
\begin{enumerate}
\item[(1)] 
Let $B:=A[t;\sigma,\delta]$
be an Ore extension of $A$. If $\sigma$ is locally algebraic, 
then $\GKdim B=\GKdim A+1$.
\item[(2)]
Let $B$ be an iterated Ore extension $A[t_1;\sigma_1,\delta_1]\cdots
[t_n;\sigma_n, \delta_n]$ where each $\sigma_i$ is 
locally algebraic. Then $\GKdim B=\GKdim A+n$.
\item[(3)]
Let $B$ be an iterated Ore extension $A[t_1;\delta_1]\cdots
[t_n;\delta_n]$. Then $\GKdim B=\GKdim A+n$.
\end{enumerate}
\end{lemma}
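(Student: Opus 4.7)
The plan is to prove (1) directly and then deduce (2) and (3). For (3), we simply note that the identity automorphism is trivially locally algebraic, so (3) is the special case of (2) with each $\sigma_i = Id$. For (2), we proceed by induction on $n$: if $A$ is affine then each intermediate algebra $A_i = A[t_1;\sigma_1,\delta_1]\cdots[t_i;\sigma_i,\delta_i]$ is again affine (an Ore extension of an affine algebra is affine), and $\sigma_{i+1}$ is locally algebraic on $A_i$ by hypothesis, so part (1) applies at each step and raises the GKdimension by one.

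For (1), since $\sigma$ is locally algebraic and $A$ is affine, one can choose a finite-dimensional $\sigma$-stable subspace $V \subseteq A$ containing $1$ that generates $A$ as an algebra. Set $U = V + \Bbbk t$, a finite-dimensional generating subspace of $B$. For the lower bound $\GKdim B \geq \GKdim A + 1$, observe that the subspaces $V^n, V^n t, \dots, V^n t^n$ sit in distinct $t$-degrees of the vector space decomposition $B = \bigoplus_{i\geq 0} A t^i$ and together lie inside $U^{2n}$. Hence $\dim U^{2n} \geq (n+1)\dim V^n$, and taking $\limsup_{n\to\infty} \log_{2n}$ yields $\GKdim B \geq \GKdim A + 1$.

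For the upper bound, we use the Ore relation $ta = \sigma(a)t + \delta(a)$ to normalize each monomial appearing in $U^n$ into a sum $\sum_{k=0}^{n} a_k t^k$ with $a_k \in A$. Since $V$ is $\sigma$-stable and $\delta(V) \subseteq V^m$ for some fixed $m$ (as $V$ generates the affine algebra $A$), a careful induction bounds each coefficient $a_k$ inside $V^{N(n)}$ for a polynomial function $N(n)$ of $n$. This gives $\dim U^n \leq (n+1)\dim V^{N(n)}$, hence $\GKdim B \leq \GKdim A + 1$; this is essentially \cite[Proposition 3.5]{KL}. The main obstacle is controlling the growth of $N(n)$: each time the commutation relation moves a $t$ past a generator from $V$ it can introduce a $\delta(V) \subseteq V^m$ correction, and naive iteration past powers $t^k$ could produce exponential blow-up in the $V$-degree. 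The key point is that, by organizing the rewriting carefully, this blow-up stays polynomial in $n$, which is enough for the GKdimension estimate.
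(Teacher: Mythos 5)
Your parts (2) and (3) match the paper exactly (induction on $n$, using that each intermediate algebra is affine, plus the observation that the identity automorphism is locally algebraic), and your lower bound in part (1) is correct. For the rest of part (1) the paper does not compute anything: it reduces to a $\sigma$-stable finite-dimensional generating subspace $V\ni 1$ with $\delta(V)\subseteq V^m$ and then cites \cite[Lemma 4.1]{Zh}. You are in effect reproving that lemma, which is fine in principle, but your upper bound ends with a claim that is actually false.

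The problem is the assertion that a \emph{polynomial} bound $a_k\in V^{N(n)}$ ``is enough for the GKdimension estimate.'' It is not: if $N(n)\sim Cn^d$ with $d>1$ and $\dim V^k\sim k^e$, then $\limsup_n\log_n\dim V^{N(n)}=de$, so $\dim U^n\le (n+1)\dim V^{N(n)}$ would only yield $\GKdim B\le d\cdot\GKdim A+1$. What you need, and what is true, is a \emph{linear} bound, and it follows directly from the two facts you already have in hand: $\sigma(V)\subseteq V$ and $\delta(V)\subseteq V^m$ give $tV^j\subseteq V^jt+V^{j+m-1}$ (the $\sigma$-term does not raise the $V$-degree at all, while the $\delta$-term raises it by $m-1$ but consumes a $t$), whence $U^{n+1}\subseteq \sum_{k}V^{a_n+m}t^k$ with $a_{n+1}=a_n+m$, i.e.\ $U^n\subseteq\sum_{k=0}^{n}V^{mn}t^k$. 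Since $\limsup_n\log_n\dim V^{mn}=\GKdim A$, the upper bound then goes through. So the ``exponential blow-up'' you worry about never arises once $V$ is $\sigma$-stable (it is precisely what happens when $\sigma$ is \emph{not} locally algebraic, and is why the hypothesis is needed); but your fallback position that polynomial growth of $N(n)$ would suffice is wrong, and the linear estimate must be stated and proved, or else one should simply cite \cite[Lemma 4.1]{Zh} as the paper does.
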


\begin{proof} (1) We may assume that $1\in V$.
Let $W$ be any finite dimensional generating subspace of $A$ 
with $1\in W$. Since $V$ generates $A$, $W\subseteq V^n$ for some 
$n$. Without loss of generality, we can assume that $W=V$. 
Since $V$ generates $A$, we have $\delta(V)\subseteq V^m$ 
for some $m$. Now the assertion follows from \cite[Lemma 4.1]{Zh}.

(2) This follows from induction and part (1).

(3) This is a special case of part (2) by setting $\sigma_i$
to be the identity. 
\end{proof}

The reader is referred to \cite[p.74]{KL} for the definition of a filtered algebra.
The following lemma is similar to \cite[Lemma 3.2]{BZ1}.

\begin{lemma}
\label{xxlem3.2}
Let $Y$ be a filtered algebra with an ${\mathbb N}$-filtration 
$\{F_i Y\}_{i\geq 0}$. Assume that the associated graded algebra
$\gr Y$ is an ${\mathbb N}$-graded domain. Suppose $Z$ is a 
subalgebra of $Y$ and let $Z_0=Z\cap F_0 Y$. If $\GKdim Z=\GKdim Z_0<\infty$,
then $Z=Z_0$.
\end{lemma}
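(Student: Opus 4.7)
The plan is to derive a contradiction from the assumption $Z \supsetneq Z_0$ by showing that the existence of even one element of $Z$ outside $Z_0$ already forces $\GKdim Z \geq \GKdim Z_0 + 1$. So pick $z \in Z \setminus Z_0$ and let $d := \min\{i \geq 0 : z \in F_i Y\}$. Since $z \notin F_0 Y$, we have $d \geq 1$, and the associated symbol $\sigma(z) \in (\gr Y)_d$ is nonzero.

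Given any finite-dimensional subspace $V_0 \subseteq Z_0$ with $1 \in V_0$, set $W := V_0 + \Bbbk z \subseteq Z$. Concatenating $n-k$ factors from $V_0$ with $k$ copies of $z$ yields
\[
V_0^{n-k} z^k \;\subseteq\; W^n, \qquad 0 \leq k \leq n.
\]
The essential claim is that these $n+1$ subspaces are linearly independent in $Y$ and that each has dimension $\dim V_0^{n-k}$. Both facts use that $\gr Y$ is a domain: for nonzero $v \in V_0^{n-k} \subseteq F_0 Y$ one has $\sigma(v z^k) = v \cdot \sigma(z)^k \in (\gr Y)_{kd}$, which is nonzero by the domain property. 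Hence right-multiplication by $z^k$ is injective on $V_0^{n-k}$, and the top-degree symbol of any putative nontrivial relation $\sum_k v_k z^k = 0$ with $v_k \in V_0^{n-k}$ produces a contradictory equation $v_j \sigma(z)^j = 0$ in the domain $\gr Y$.

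Consequently,
\[
\dim W^n \;\geq\; \sum_{k=0}^n \dim V_0^{n-k} \;=\; \sum_{j=0}^n \dim V_0^j.
\]
Because $1 \in V_0$, the sequence $\dim V_0^n$ is non-decreasing, so if $\alpha < \limsup_n \log_n \dim V_0^n$, then $\dim V_0^n \geq n^{\alpha}$ along a cofinal set of $n$, giving $\sum_{j=0}^{2n} \dim V_0^j \geq n \cdot \dim V_0^n \geq n^{\alpha+1}$ for those $n$. It follows that $\limsup_m \log_m \dim W^m \geq \alpha + 1$; letting $\alpha$ approach $\limsup_n \log_n \dim V_0^n$ and then taking the supremum over $V_0 \subseteq Z_0$ yields $\GKdim Z \geq \GKdim Z_0 + 1$, contradicting $\GKdim Z = \GKdim Z_0$ and forcing $Z = Z_0$. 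The only nontrivial step is the direct-sum/injectivity argument via the symbol map, which rests entirely on $\gr Y$ being a domain; the rest is routine GK-dimension bookkeeping.
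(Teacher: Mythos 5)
Your proof is correct, and its engine is the same one the paper uses: because $\gr Y$ is a domain, an element of $Z$ of positive filtration degree generates, together with $Z_0$, a family of subspaces in direct sum, which forces the GK-dimension up by one. The packaging, however, is genuinely different. The paper passes to the associated graded algebra: it puts the induced filtration on $Z$, observes $\gr Z\subseteq \gr Y$, invokes \cite[Lemma 6.5]{KL} to compare $\GKdim Z$ with $\GKdim \gr Z$, and then exhibits the direct sum $Z_0+Z_0a+Z_0a^2+\cdots$ inside the graded algebra $\gr Z$, where degree reasons make independence immediate. You never form $\gr Z$ at all: you work directly in $Y$, use the symbol map $\sigma$ to verify by hand that $V_0^{n-k}z^k$, $0\le k\le n$, are independent and that right multiplication by $z^k$ is injective on $V_0^{n-k}$, and then carry out the $\limsup$ estimate $\GKdim Z\ge \GKdim Z_0+1$ explicitly from $\dim W^{2n}\ge (n+1)\dim V_0^n$. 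What your route buys is self-containment --- you avoid the filtered-versus-graded GK-dimension transfer lemma and its hypotheses entirely, at the cost of a longer computation; the paper's route is shorter but leans on the cited machinery. Both arguments rest on exactly the same two uses of the domain hypothesis (nonvanishing of $v\,\sigma(z)^k$ and the top-symbol argument), so the mathematical content is equivalent.
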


\begin{proof} Suppose $Z$ strictly contains the subalgebra $Z_0$. 
There is a natural filtration on $Z$ induced from $Y$ by taking
$F_i Z:=Z\cap F_i Y$. As a result, $\gr Z$ is a subalgebra of $\gr Y$. 
By \cite[Lemma 6.5]{KL},
$$\GKdim Z\geq \GKdim \gr Z\geq \GKdim F_0 Z=\GKdim Z_0=\GKdim Z.$$
Since $\gr Z$ is an ${\mathbb N}$-graded subalgebra of $\gr Y$ that
strictly contains $Z_0=F_0 Z$, there is an element $a\in \gr Z$ of
positive degree. Considering the grading, we see that
$$Z_0+ Z_0 a+ Z_0 a^2 +\cdots$$
is a direct sum contained in $\gr Z$. As a result, we obtain that
$$\GKdim \gr Z\geq \GKdim (\gr Z)_0+1=\GKdim Z_0+1,$$
which yields a contradiction. Therefore $Z=Z_0$.
\end{proof}

The above lemma has an immediate consequence.

\begin{proposition}
\label{xxpro3.3}
Let $Y$ be an iterated Ore extension $A[t_1;\sigma_1,\delta_1]\cdots
[t_n;\sigma_n, \delta_n]$ of a domain $A$. Let $B$ be a subalgebra of $Y$
containing $A$. If $\GKdim A=\GKdim B<\infty$, then $A=B$.
\end{proposition}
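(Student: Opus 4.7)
The plan is to proceed by induction on $n$, the length of the iterated Ore extension, with Lemma \ref{xxlem3.2} doing the work at each stage. The base case $n=0$ is trivial since then $Y=A=B$.

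For the inductive step, set $A_{n-1}:=A[t_1;\sigma_1,\delta_1]\cdots[t_{n-1};\sigma_{n-1},\delta_{n-1}]$, so that $Y=A_{n-1}[t_n;\sigma_n,\delta_n]$. Equip $Y$ with the standard $t_n$-degree $\mathbb{N}$-filtration
$$F_kY=\Bigl\{\textstyle\sum_{i\leq k} a_i t_n^i : a_i\in A_{n-1}\Bigr\}.$$
Then $F_0Y=A_{n-1}$, and since $\delta_n$ strictly lowers $t_n$-degree it disappears upon passing to leading terms, so $\gr Y\cong A_{n-1}[t_n;\sigma_n]$ as graded algebras. Because $A$ is a domain and each skew polynomial extension of a domain is again a domain, $A_{n-1}$ is a domain, and hence so is $\gr Y$. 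Thus $\gr Y$ is an $\mathbb{N}$-graded domain, as required by Lemma \ref{xxlem3.2}.

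Now let $B_0:=B\cap F_0Y=B\cap A_{n-1}$. From $A\subseteq B_0\subseteq B$ and $\GKdim A=\GKdim B<\infty$ we read off $\GKdim B_0=\GKdim B=\GKdim A$. Applying Lemma \ref{xxlem3.2} with $Z=B$ yields $B=B_0\subseteq A_{n-1}$. Hence $A\subseteq B\subseteq A_{n-1}$ with $\GKdim B=\GKdim A$, and the inductive hypothesis applied to the iterated Ore extension $A_{n-1}$ of length $n-1$ forces $B=A$. The argument needs no hypothesis on the $\sigma_i$ or $\delta_i$ (such as local algebraicity), since only the single-variable filtration is used at each step; the only thing requiring care is the identification of $\gr Y$ with $A_{n-1}[t_n;\sigma_n]$ and the preservation of the domain property up the tower.
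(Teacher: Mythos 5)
Your proof is correct and is essentially the paper's own argument: the paper picks the minimal $m$ with $B\subseteq A[t_1;\sigma_1,\delta_1]\cdots[t_m;\sigma_m,\delta_m]$, filters that layer by $t_m$-degree, and invokes Lemma \ref{xxlem3.2} to contradict minimality, which is just the contrapositive packaging of your induction that peels off $t_n$ one variable at a time. Your explicit check that $\gr Y\cong A_{n-1}[t_n;\sigma_n]$ is an $\mathbb{N}$-graded domain is a detail the paper leaves implicit, but the substance is identical.
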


\begin{proof} Let $m\leq n$ be the minimal integer
such that $B\subseteq A[t_1;\sigma_1,\delta_1]\cdots
[t_m;\sigma_m, \delta_m]$. It remains to show that
$m=0$. Suppose on the contrary that $m\geq 1$. Let 
$$Y=A[t_1;\sigma_1,\delta_1]\cdots
[t_m;\sigma_m, \delta_m]
\quad {\text{and}}\quad
Y_0=A[t_1;\sigma_1,\delta_1]\cdots
[t_{m-1};\sigma_{m-1}, \delta_{m-1}].$$
Define an ${\mathbb N}$-filtration
on $Y$ by $F_i Y=\sum_{s=0}^{i} Y_0 t_m^{s}$. 
Let $Z=B$ and $Z_0=Z\cap Y_0$. By the 
choice of $m$, we have $Z\neq Z_0$. By the 
hypothesis on the GKdimension, we have
$$\GKdim Z_0\geq \GKdim A=\GKdim B=\GKdim Z\geq \GKdim
Z_0.$$
By Lemma \ref{xxlem3.2}, we have $Z=Z_0$, a contradiction.
Therefore the assertion follows.
\end{proof}

\subsection{Homological transcendence degree}
\label{xxsec3.2}
Another useful invariant is the Homological transcendence degree
introduced in \cite{YZ}. Recall from \cite[Definition 1.1]{YZ}
that the {\it Homological transcendence degree} of a division
algebra $D$ is defined to be
$$\Htr D:=\injdim D\otimes D^{op}$$
where $D^{op}$ is the opposite algebra of $D$. One result of 
\cite[Proposition 1.8]{YZ} is that $\Htr D=n$ if $D$ is a stratiform 
division algebra of stratiform length $n$. We say $A$ is 
stratiform if $A$ is Goldie prime and 
the ring of its fractions, denoted by $Q(A)$, is stratiform.
As an immediate consequence, we have

\begin{lemma}
\label{xxlem3.4}
Let $A$ be a noetherian domain that is stratiform. 
If $B$ is an $n$-step iterated Ore extension of $A$, then 
$\Htr Q(B)=\Htr Q(A)+n$.
\end{lemma}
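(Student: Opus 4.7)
The plan is to build a stratification of $Q(B)$ that extends any given stratification of $Q(A)$ by exactly $n$ additional type (ii) steps, and then appeal to \cite[Proposition 1.8]{YZ}, which identifies $\Htr D$ with the stratiform length of $D$.

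First I would note that an Ore extension of a noetherian domain by an automorphism and a $\sigma$-derivation is again a noetherian domain (see \cite[Chapter 1]{MR}), so $B$ is a noetherian domain and $Q(B)$ exists. Writing $A_0 = A$ and $A_i = A_{i-1}[t_i;\sigma_i,\delta_i]$ so that $A_n = B$, I would extend the data $(\sigma_i,\delta_i)$ inductively to the quotient division ring $Q(A_{i-1})$: the automorphism $\sigma_i$ extends uniquely by $\sigma_i(ab^{-1}) = \sigma_i(a)\sigma_i(b)^{-1}$, and $\delta_i$ extends uniquely to a $\sigma_i$-derivation of $Q(A_{i-1})$ via the standard quotient rule forced by $\delta_i(bb^{-1}) = 0$. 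This yields an Ore extension $Q(A_{i-1})[t_i;\sigma_i,\delta_i]$ over a division ring, which is a left and right principal ideal domain, and whose quotient division ring is canonically identified with $Q(A_i)$, since both arise as the Ore localization of $A_i$ at the (Ore) set of nonzero elements of $A_{i-1}$ followed by inverting the remaining nonzero elements.

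Each inclusion $Q(A_{i-1}) \subseteq Q(A_i)$ is thus precisely a type (ii) step in Schofield's sense. Setting $m := \Htr Q(A)$, which by \cite[Proposition 1.8]{YZ} equals the stratiform length of $Q(A)$, I would fix a stratification $\Bbbk = S_0 \subseteq S_1 \subseteq \cdots \subseteq S_p = Q(A)$ with exactly $m$ steps of type (ii), and concatenate it with the chain $Q(A) = Q(A_0) \subseteq Q(A_1) \subseteq \cdots \subseteq Q(A_n) = Q(B)$. The resulting stratification of $Q(B)$ has exactly $m + n$ type (ii) steps, so $Q(B)$ is stratiform of stratiform length $m + n$. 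Applying \cite[Proposition 1.8]{YZ} once more yields $\Htr Q(B) = m + n = \Htr Q(A) + n$.

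I expect the main obstacle to be the compatibility claim that $Q(A_i)$ agrees with the quotient division ring of the extended Ore extension $Q(A_{i-1})[t_i;\sigma_i,\delta_i]$, which relies on the fact that the nonzero elements of $A_{i-1}$ form an Ore set in $A_i$ (using that $\sigma_i$ is an automorphism and that $A_{i-1}$ is a noetherian domain) and that Ore localization commutes with Ore extension in this setting. Once this is in place at each inductive step, the rest of the proof is essentially bookkeeping with the definitions and the cited invariance of stratiform length.
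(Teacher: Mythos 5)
Your proposal is correct and matches the paper's intent: the paper gives no written proof, presenting the lemma as an immediate consequence of \cite[Proposition 1.8]{YZ} (identifying $\Htr$ of a stratiform division algebra with its stratiform length), with exactly the mechanism you describe — each Ore extension step passes to the quotient division ring as one additional type (ii) step in the stratification. Your write-up simply supplies the standard details (extending $\sigma_i$ and $\delta_i$ to $Q(A_{i-1})$ and identifying $Q(A_i)$ with the quotient division ring of $Q(A_{i-1})[t_i;\sigma_i,\delta_i]$) that the paper leaves implicit.
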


There is a variety of examples which are stratiform algebras; 
and the following are some typical examples (details are 
omitted).

\begin{example}
\label{xxex3.5} 
The following algebras are stratiform.
\begin{enumerate}
\item[(1)]
Affine commutative domains.
\item[(2)]
PI prime algebras that are finitely
generated over its affine center.
\item[(3)]
Skew polynomial algebras and their localizations 
\cite[Example 1.9(g)]{YZ}. 
\item[(4)]
Quantum Weyl algebras as defined next or their localizations.
Let $q\neq 0,1$ be a scalar in $\Bbbk$.
Let $A_{1}^{q}(\Bbbk)$ denote the first quantum Weyl algebra, which is 
a $\Bbbk$-algebra generated by $x, y$ subject to the relation: 
$xy-qyx=1$. 
\item[(5)]
Prime algebras that can be written as a tensor product of algebras
listed as above.
\end{enumerate}
\end{example}

Here is a version of Proposition \ref{xxpro3.3} with the 
GKdimension replaced by the Homological transcendence degree.

\begin{proposition}
\label{xxpro3.6}
Let $A$ be a noetherian stratiform domain.
Let $Y$ be an iterated Ore extension $A[t_1;\sigma_1,\delta_1]\cdots
[t_n;\sigma_n, \delta_n]$. Let $B$ be a subalgebra of $Y$
containing $A$ such that it is stratiform. 
If $\Htr Q(A)=\Htr Q(B)$, then $A=B$.
\end{proposition}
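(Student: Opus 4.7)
The plan is to adapt the argument of Proposition \ref{xxpro3.3} by replacing $\GKdim$ with $\Htr$. Let $m \leq n$ be minimal such that $B \subseteq A_m := A[t_1;\sigma_1,\delta_1]\cdots[t_m;\sigma_m,\delta_m]$, with the convention $A_0 = A$. The goal is to conclude that $m = 0$. Supposing for contradiction that $m \geq 1$, set $Y_0 := A_{m-1}$ and $Y := A_m = Y_0[t_m;\sigma_m,\delta_m]$, and equip $Y$ with the $t_m$-degree filtration $F_i Y = \sum_{s \leq i} Y_0 t_m^s$. By minimality of $m$, there is an element $b \in B$ of positive $t_m$-degree. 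Passing to quotient rings inside $Q(Y) = Q(Y_0)(t_m;\sigma_m,\delta_m)$, one has a chain
\[
Q(A) \subseteq Q(B) \subseteq Q(Y),
\]
and Lemma \ref{xxlem3.4} gives $\Htr Q(Y) = \Htr Q(A) + m$.

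The main step will be to prove the $\Htr$-analog of Lemma \ref{xxlem3.2}: if the stratiform noetherian subdomain $B$ of $Y$ contains $A$ but is not contained in $Y_0$, then $\Htr Q(B) \geq \Htr Q(A) + 1$. Establishing this inequality contradicts the hypothesis $\Htr Q(A) = \Htr Q(B)$ and so forces $m = 0$, completing the proof.

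To prove this analog, I would set $K := Q(Y_0) \cap Q(B)$ inside $Q(Y)$, so that $Q(A) \subseteq K$ and hence $\Htr K \geq \Htr Q(A)$ by monotonicity of $\Htr$ \cite{YZ}. The element $b \in Q(B) \setminus K$ has positive $t_m$-degree in the Ore extension $Q(Y) = Q(Y_0)(t_m;\sigma_m,\delta_m)$, so that $Q(B)$ extends $K$ in an ``Ore-like'' manner. The idea is to augment a stratification of $K$ by a single type-\textup{(ii)} step corresponding to a suitable rational function of $b$, thereby producing a stratification of $Q(B)$ of length at least $\Htr K + 1$. Invoking \cite[Proposition 1.8]{YZ}, which identifies the stratiform length of a stratiform division ring with its $\Htr$, yields the desired strict inequality
\[
\Htr Q(B) \;\geq\; \Htr K + 1 \;\geq\; \Htr Q(A) + 1.
\]

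The principal obstacle will be producing this extra type-\textup{(ii)} step rigorously: one must verify both that $K$ carries a stratification inherited from the stratiform $Q(Y_0)$, and that the passage from $K$ to $Q(B)$ is genuinely of Ore type rather than a finite extension step, which would not increment the stratiform length. This is the substantive analog of the graded-subalgebra argument in Lemma \ref{xxlem3.2}, which exploited the fact that an additional generator of positive degree in the associated graded ring increases $\GKdim$ by at least one; for $\Htr$ there is no comparably direct filtration/grading comparison, so one must instead work intrinsically with the stratification structure inherited from $Q(Y)$.
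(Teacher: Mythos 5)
There is a genuine gap, and you have in fact flagged it yourself: your whole argument hinges on producing an extra type-(ii) step in a stratification when passing from $K := Q(Y_0)\cap Q(B)$ to $Q(B)$, and no mechanism for this is given or readily available. The subring of $Q(Y)$ generated by $K$ and the positive-degree element $b$ is not known to be an Ore extension of $K$, nor is $Q(B)$ known to be the quotient division ring of such an extension; a priori the passage from $K$ to $Q(B)$ need be of neither type (i) nor type (ii), in which case the stratiform-length count never gets started. Two auxiliary steps are also unjustified: $K$ is a division subring of $Q(Y)$ but is not known to be stratiform (stratiformity does not pass to arbitrary division subrings), and the ``monotonicity of $\Htr$'' you invoke for $Q(A)\subseteq K$ is not a general fact in \cite{YZ} --- it is only available under additional hypotheses, precisely because $\Htr$ is a homological invariant admitting no direct filtration or grading comparison, as you yourself observe at the end.

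The paper's proof sidesteps all of this with a different key input: Schofield's bimodule theorem \cite[Theorem 24]{Sc}. One takes the element $a\in B$ of positive degree supplied by the filtration argument in the proof of Lemma \ref{xxlem3.2} (exactly as in Proposition \ref{xxpro3.3}), so that $Q(A)+Q(A)a+Q(A)a^{2}+\cdots$ is a direct sum and hence $\dim_{Q(A)}Q(B)$ is infinite. On the other hand, the hypothesis $\Htr Q(A)=\Htr Q(B)$, together with the assumption that $B$ is stratiform, forces $Q(A)$ and $Q(B)$ to have the same stratiform length by \cite[Proposition 1.8]{YZ}; since $Q(B)$ is a $(Q(A),Q(B))$-bimodule finitely generated as a right $Q(B)$-module, Schofield's theorem makes it finitely generated as a left $Q(A)$-module, a contradiction. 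In other words, the statement you should be proving is not ``$\Htr Q(B)\geq \Htr Q(A)+1$'' via an explicit augmented stratification, but rather ``equal stratiform length plus one-sided finite generation implies two-sided finite generation,'' which is exactly \cite[Theorem 24]{Sc}; without that (or an equivalent substitute) your outline cannot be completed.
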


\begin{proof}
Suppose on the contrary that $A\neq B$. 
By the proof of Lemma \ref{xxlem3.2},
there is an element $a\in B$ such that $Q(A)+Q(A)a+Q(A)a^2+
\cdots$ is a direct sum. 
This implies that $\dim _{Q(A)}Q(B)$
is infinite. Note that $Q(B)$ is a $(Q(A),Q(B))$-bimodule
that is finitely generated as a right $Q(B)$-module. 
Since $Q(A)$ and $Q(B)$ have the same stratiform length
by \cite[Proposition 1.8]{YZ}, $Q(B)$ is finitely
generated as left $Q(A)$-module by \cite[Theorem 24]{Sc}.
This contradicts the fact that $\dim _{Q(A)}Q(B)$
is infinite. Therefore, we have that $A=B$. 
\end{proof}

\section{Divisor Subalgebras and Skew cancellations}
\label{xxsec4}

Recall from \cite{BZ1, LeWZ, LuWZ} that the (strong) retractability implies
the (strong) cancellation. It is clear that the (strong) retractability 
implies the (strong) $Z$-retractability, which in turn implies (strong) 
Morita cancellation, also see Section \ref{xxsec2} and Theorem \ref{xxthm0.5}. 
In this section we continue our investigation on the (strong) 
retractability with a twist. 

As asked in \cite{LeWZ}, one would like to know how localizations affect 
cancellative properties. Indeed, cancellative and Morita cancellative 
properties are preserved under localizations for many families of algebras.
We add some results along this line; and we will address the problem 
in a forthcoming paper later on. 

Note that even if the discriminant $d$ of an algebra $A$ is dominating or effective, 
the discriminant of $A_{d}$ becomes invertible, where $A_{d}$ is the localization of $A$ 
with respect to the Ore set $\{d^{i}\mid i\geq 0\}$. As a result, the discriminant of $A_{d}$ 
is neither dominating nor effective. However, since the element $d$ is invertible in $A_{d}$, 
it will be sent to an invertible element by any 
$\Bbbk$-algebra isomorphism $\phi \colon A_{d}[s_{1}, \cdots, s_{n}]
\longrightarrow B[t_{1}, \cdots, t_{n}]$. So we may still be able to 
prove that the algebra $A_{d}$ is strongly retractable in many situations. 

The point of this section is that we can do more. Namely, we can prove
a version of the strong retractability even in the Ore extension setting.
The main idea is to utilize the notion of divisor subalgebras introduced in \cite{CYZ1}.

We first recall the definition of a divisor subalgebra. Let $A$ be a 
domain. Let $F$ be a subset of $A$. Let $Sw(F)$ 
denote the set of $g\in A$ such that $f=agb$ for some $a, b\in A$ and 
$0\neq f\in F$. That is, $Sw(F)$ consists of all the subwords of the 
elements in $F$. The following definition is quoted from \cite{CYZ1}.

\begin{definition}
\label{xxdef4.1}
Let $F$ be a set of elements in a domain $A$.
\begin{enumerate}
\item[(1)] 
We set $D_{0}(F)=F$ and inductively define $D_{n}(F)$ for $n\geq 1$ as the 
$\Bbbk$-subalgebra of $A$ generated by $Sw(D_{n-1}(F))$. The 
subalgebra ${\mathbb D}(F)=\cup_{n\geq 0} D_{n}(F)$ is called the {\it divisor 
subalgebra} of $A$ generated by $F$. If $F$ is the singleton $\{f\}$, 
we simply write ${\mathbb D}(\{f\})$ as ${\mathbb D}(f)$.
If we need to indicate the ambient algebra $A$, we write ${\mathbb D}(F)$
as ${\mathbb D}_A(F)$.
\item[(2)] 
If $f=d(A/Z)$ (if the discriminant $d(A/Z)$ indeed exists), we call 
${\mathbb D}(f)$ the {\it discriminant-divisor subalgebra} of $A$ or $DDS$ of 
$A$, and write it as $\mathbb{D}(A)$.
\end{enumerate} 
\end{definition}

We now define some elements which play the same role as the 
dominating or effective elements in the study of cancellative 
properties. 

\begin{definition}
\label{xxdef4.2}
Let $F$ be a set of elements in an algebra $A$ which is a domain. 
\begin{enumerate}
\item[(1)]
We say $F$ is a {\it controlling set} if ${\mathbb D}(F)=A$. 
\item[(2)]
An element $0\neq f\in A$ is called {\it controlling} if ${\mathbb D}(f)=A$.
\end{enumerate} 
\end{definition}

Next we give some examples of controlling elements. For an algebra $A$, let
$A^{\times}$ denote the set of invertible elements in $A$. 

\begin{example}
\label{xxex4.3}
Let $q\neq 0,1$ be a scalar in $\Bbbk$.
\begin{enumerate}
\item[(1)]
Let $A_{1}^{q}(\Bbbk)$ be the first quantum Weyl algebra defined as in
Example \ref{xxex3.5}(4). Set $z=xy-yx$, then $xy=\frac{qz-1}{q-1}$ and 
$yx=\frac{z-1}{q-1}$. It is obvious that $z$ is controlling, 
dominating, and effective in $A_{1}^{q}(\Bbbk)$.
\item[(2)] 
We can localize $A_{1}^{q}(\Bbbk)$ with respect to the Ore set 
generated by $z$ and denote the localization by $B_{1}^{q}(\Bbbk)$. 
Since $z$ is a controlling element in $A_{1}^{q}(\Bbbk)$, we have 
${\mathbb D}_{B_{1}^{q}(\Bbbk)}(1)=B_{1}^{q}(\Bbbk)$. Note that the 
center of $B_{1}^{q}(\Bbbk)$ is $\Bbbk$ if $q$ is not a root of 
unity. If $q$ is a root of unity of order $l$, then the center 
of $B_{1}^{q}(\Bbbk)$ is isomorphic to $Z:=\Bbbk[x^{l}, y^{l}, z^{l}]/I$ 
where $I=(z^{l}[1-(1-q)^{l}x^{l}y^{l}]-1)$ by \cite[Proposition 3.2]{LY}.
It is also clear that ${\mathbb D}_Z(1)=Z$.
\item[(3)]
The above example can be extended to higher ranks with 
multiparameters in both root of unity and non-root of unity cases.
\item[(4)]
Let $D=\Bbbk [h^{\pm 1}]$ with a $\Bbbk$-algebra automorphism $\phi$ 
defined by $\phi(h)=qh$ for some $q\in \Bbbk^{\times}$. Let $0\neq a\in D$ 
and denote the generalized Weyl algebra by $A=D(a, \sigma)$, which is the 
$\Bbbk$-algebra generated by $x, y, h^{\pm 1}$ subject to the relations:
\[
xy=a(qh), \quad yx=a(h), xh=qha, yh=q^{-1}hy.
\]
Then ${\mathbb D}_{A}(1)=A$.
\item[(5)]
Fix a positive integer $n\geq 2$, let $q$ be a set of nonzero 
scalars $\{q_{ij}\mid 1\leq i< j\leq n\}$.
A quantum torus (or quantum Laurent polynomial algebra) $T^q_{n}(\Bbbk)$ is 
generated by generators
$\{x_1^{\pm 1},\cdots, x_n^{\pm 1}\}$ and subject to the relations
$$x_j x_i= q_{ij} x_i x_j$$
for all $i<j$. Note that $T^q_n(\Bbbk)$ is PI if and only if all $q_{ij}$ are roots 
of unity. It is clear that ${\mathbb D}_{T^q_n(\Bbbk)}(1)=T^q_n(\Bbbk)$.
This well-known that the center of $T^q_n(\Bbbk)$ is isomorphic to a 
commutative Laurent polynomial algebra of possibly lower rank. Note that
the quanutm torus is a localization of a skew polynomial ring given in 
Example \ref{xxex3.5}(3). 
\item[(6)]
The tensor products or twisted tensor products $A$ of these examples also 
satisfy ${\mathbb D}_{A}(1)=A$.
\end{enumerate}
\end{example}

The proof of the following lemma is easy and omitted.

\begin{lemma}
\label{xxlem4.4}
Let $A$ be a domain and $F$ be a subset of $A$. We have the following
\begin{enumerate}
\item[(1)]
${\mathbb D}({\mathbb D}(F))={\mathbb D}(F)$.
\item[(2)]
${\mathbb D}_{{\mathbb D}(F)}(F)={\mathbb D}_A(F)$.
\item[(3)]
Suppose $B$ is a subalgebra of $A$ containing $F$. Then 
${\mathbb D}_{B}(F) \subseteq {\mathbb D}_A(F)$.
\item[(4)]
Let $C$ be the Ore extension $A[t;\sigma,\delta]$.
Then ${\mathbb D}_C(F)\subseteq A$.
As a consequence, ${\mathbb D}_C(F)={\mathbb D}_A(F)$.
\item[(5)]
Let $C$ be an iterated Ore extension of $A$. Then ${\mathbb D}_C(F)={\mathbb D}_A(F)$.
\item[(6)]
Let $\phi: A\to B$ be an injective algebra homomorphism.
Then ${\mathbb D}_A(F)\subseteq {\mathbb D}_B(\phi(F))$.
If $\phi$ is an isomorphism, then ${\mathbb D}_A(F)= 
{\mathbb D}_B(\phi(F))$.
\item[(7)]
Let $\overline{A}$ be an iterated Ore extension 
$A[t_1,\sigma_1,\delta_1][t_2,\sigma_2, \delta_2]\cdots 
[t_n,\sigma_n,\delta_n]$ and $\overline{B}$ be an iterated 
Ore extension 
$B[t'_1,\sigma'_1,\delta'_1][t'_2,\sigma'_2, \delta'_2]\cdots 
[t'_n,\sigma'_n,\delta'_n]$.
Suppose 
$$\phi: \overline{A}
\to \overline{B}$$
is an isomorphism. Then $\phi({\mathbb D}_{\overline{A}}(1))
={\mathbb D}_{\overline{B}}(1)\subseteq B$.
\item[(8)]
Suppose $A$ and $B$ are algebras such that $A\otimes_{\Bbbk} B$ is a
domain. If ${\mathbb D}_{A}(1)=A$ and ${\mathbb D}_{B}(1)=B$,
then ${\mathbb D}_{A\otimes_{\Bbbk} B}(1)=A\otimes_{\Bbbk} B$.
\item[(9)]
If $A$ is a finitely generated left (or right) module over 
${\mathbb D}_{A}(1)$, then ${\mathbb D}_{A}(1)=A$.
\end{enumerate}
\end{lemma}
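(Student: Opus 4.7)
The plan is to prove the nine parts roughly in order, leveraging the early parts as tools for later ones. Parts (1), (3), (4), and (6) follow from direct inductive arguments on the filtration $D_n$ in the definition of $\mathbb{D}(F)$.

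For (1), it suffices to show $Sw(\mathbb{D}(F))\subseteq \mathbb{D}(F)$: given $g$ with $f=agb$ and $f\in \mathbb{D}(F)=\bigcup_n D_n(F)$, we have $f\in D_n(F)$ for some $n$, hence $g\in Sw(D_n(F))\subseteq D_{n+1}(F)\subseteq \mathbb{D}(F)$; induction then gives $D_m(\mathbb{D}(F))\subseteq \mathbb{D}(F)$ for all $m$. Part (3) follows from $Sw_B\subseteq Sw_A$ when $B\subseteq A$, so $D_n^B(F)\subseteq D_n^A(F)$ inductively. Part (6) uses functoriality: applying $\phi$ to $f=agb$ gives $\phi(f)=\phi(a)\phi(g)\phi(b)$, hence $\phi(Sw_A(F))\subseteq Sw_B(\phi(F))$; the isomorphism case follows by symmetry after applying the same argument to $\phi^{-1}$.

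The key technical step is (4). Write $C=A[t;\sigma,\delta]$. Since $A$ is a domain and $\sigma$ is an automorphism, the associated graded ring $\gr C=A[t;\sigma]$ with respect to the $t$-degree filtration is a domain, so $t$-degrees add in products. Thus, if $0\neq f\in A$ and $f=agb$ with $a,b\in C$, comparing $t$-degrees forces $\deg_t(a)=\deg_t(g)=\deg_t(b)=0$, so $a,g,b\in A$. Hence $Sw_C(F)=Sw_A(F)$ for $F\subseteq A$, and induction gives $D_n^C(F)=D_n^A(F)$, so $\mathbb{D}_C(F)=\mathbb{D}_A(F)\subseteq A$. Part (5) is iteration of (4), and part (7) combines (5) (applied to $\overline{A}$ and $\overline{B}$) with the isomorphism case of (6) applied to $\phi$, yielding $\phi(\mathbb{D}_A(1))=\mathbb{D}_B(1)\subseteq B$.

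For (2), one inclusion is trivial since $\mathbb{D}_{\mathbb{D}(F)}(F)\subseteq \mathbb{D}(F)=\mathbb{D}_A(F)$; the reverse inclusion requires an inductive verification that the subword witnesses may be taken inside $\mathbb{D}(F)$, which is the subtle point and uses (1) to keep the iterates in $\mathbb{D}(F)$. For (8), apply (6) to the natural injections $A\hookrightarrow A\otimes_{\Bbbk}B$ and $B\hookrightarrow A\otimes_{\Bbbk}B$ (where we use that $A\otimes_{\Bbbk} B$ is a domain so $\mathbb{D}_{A\otimes B}$ is defined): these give $A=\mathbb{D}_A(1)\subseteq \mathbb{D}_{A\otimes B}(1)$ and similarly $B\subseteq \mathbb{D}_{A\otimes B}(1)$, and since $\mathbb{D}_{A\otimes B}(1)$ is a subalgebra it contains all of $A\otimes_{\Bbbk}B$. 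For (9), set $R=\mathbb{D}_A(1)$ and take $a\in A$. Right multiplication by $a$ is a left $R$-linear endomorphism of the finitely generated left $R$-module $A$, so a Cayley-Hamilton style argument produces a relation of the form $a\,Q(a)=r$ with $Q(a)\in A$ and $0\neq r\in R$ (after factoring out the highest common power of $a$ using that $A$ is a domain). This exhibits $a$ as a subword of $r\in R$, and since $\mathbb{D}_A(R)=R$ by (1), we conclude $a\in R$, so $R=A$.

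The main obstacles are (2), where one must carefully check that the subword relations witnessing elements of $\mathbb{D}_A(F)$ can be taken entirely within $\mathbb{D}(F)$, and (9), where the Cayley-Hamilton type input may require extra care if $R$ is not commutative (one may need to pass to the subalgebra $R\langle a\rangle$ and exploit that it remains finitely generated as a left $R$-module to deduce the required integrality relation).
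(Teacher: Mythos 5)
The paper gives no proof of this lemma (it is declared ``easy and omitted''), so I can only assess your argument on its own terms. Parts (1), (3), (4), (5), (6), (7), (8) are correct as you present them: the inductive comparison of the sets $D_n$, the $t$-degree argument for (4) (degrees add in $A[t;\sigma,\delta]$ because $A$ is a domain and $\sigma$ is bijective, so a subword of a nonzero element of $A$ has degree $0$), and the assembly of (7) and (8) from (5) and (6) are all sound. In (2) you correctly isolate the reverse inclusion as the only issue but then leave it unproved. The missing observation is one line: if $0\neq f\in D_{n-1}(F)$ and $f=agb$ in $A$, then $a$ and $b$ are themselves subwords of $f$ (since $f=1\cdot a\cdot(gb)$ and $f=(ag)\cdot b\cdot 1$), so $a,g,b\in D_n(F)\subseteq {\mathbb D}(F)$ and every subword relation used to build ${\mathbb D}_A(F)$ already takes place inside ${\mathbb D}(F)$; induction then gives $D_n^{A}(F)\subseteq D_n^{{\mathbb D}(F)}(F)$. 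You should state this explicitly rather than gesture at it.

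The genuine gap is in (9). Your reduction is right: it suffices, for $0\neq a\in A$, to produce $u,v\in A$ with $0\neq uav\in R:={\mathbb D}_A(1)$, since then $a\in Sw(R)\subseteq{\mathbb D}_A(R)=R$ by (1). And when $R$ is commutative the determinant trick does give a monic relation $a^n+c_{n-1}a^{n-1}+\cdots+c_0=0$ with $c_i\in R$, from which factoring out the largest right-hand power of $a$ (legitimate since $A$ is a domain) yields $ua=c_i\neq 0$. But the trick genuinely requires $R$ commutative: one needs the left-multiplication operators by elements of $R$ to commute with one another to form the adjugate matrix. Your proposed repair --- passing to $R\langle a\rangle$ --- does not close this: $R\langle a\rangle$ is a subobject of the finitely generated left $R$-module $A$, and over a non-noetherian ring such subobjects need not be finitely generated, so no integrality relation follows. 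What does work: commutativity of $R$ (determinant trick), or left noetherianity of $R$ (the chain $R\subseteq R+Ra\subseteq R+Ra+Ra^2\subseteq\cdots$ stabilizes, giving the same monic relation), or $R$ a left Ore domain (compare ranks over $Q(R)$ along $A\supseteq Aa\supseteq Aa^2\supseteq\cdots$, each term isomorphic to $A$, to see $A/Aa$ is torsion and hence $R\cap Aa\neq 0$). As stated, for an arbitrary domain $A$ with no hypothesis on ${\mathbb D}_A(1)$, your argument is incomplete; you should either add such a hypothesis or observe that the paper's applications of (9) (affine commutative domains of GK-dimension one) fall squarely in the commutative case.
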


Now we are ready to prove Theorems \ref{xxthm0.9}
and \ref{xxthm0.10}.

\begin{proof}[Proof of Theorem \ref{xxthm0.9}]
Let $A$ be an affine domain of finite GKdimension.
Let $\overline{A}$ be an iterated Ore extension 
$A[t_1;\sigma_1,\delta_1][t_2;\sigma_2, \delta_2]\cdots 
[t_n;\sigma_n,\delta_n]$ where each $\sigma_i$
is locally algebraic. By Lemma \ref{xxlem3.1}(2),
$\GKdim \overline{A}=\GKdim A+n$.
Now let $B$ be an algebra and $\overline{B}$ be
an iterated Ore extension 
$B[t'_1;\sigma'_1,\delta'_1][t'_2;\sigma'_2, \delta'_2]\cdots 
[t'_n;\sigma'_n,\delta'_n]$.
Suppose $\phi: \overline{A}\to \overline{B}$
is an algebra isomorphism. It remains to show that
$A\cong B$. By the hypothesis and Lemma 4.4(7), 
$\phi(A)=\phi({\mathbb D}_{\overline{A}}(1))=
{\mathbb D}_{\overline{B}}(1)\subseteq B$. 
Let $B'$ denote $\phi^{-1}(B)$. Then $A\subseteq B'$
and $\GKdim B'=\GKdim B$. By the definition of 
$\overline{B}$, we have 
$$\GKdim B'=\GKdim B\leq \GKdim \overline{B}-
n=\GKdim \overline{A}-n=\GKdim A\leq \GKdim B.$$
Therefore, $\GKdim A=\GKdim B'<\infty$. By 
Proposition \ref{xxpro3.3}, we have $A=B'$. 
This implies that $\phi: A\to B$ is an isomorphism.
\end{proof}

\begin{proof}[Proof of Theorem \ref{xxthm0.10}]
Let $A$ be a noetherian domain that is 
stratiform. Let $\overline{A}$ be an iterated Ore extension 
$A[t_1;\sigma_1,\delta_1][t_2;\sigma_2, \delta_2]\cdots 
[t_n;\sigma_n,\delta_n]$. By Lemma \ref{xxlem3.4},
$\Htr \overline{A}=\Htr A+n$.
Now let $B$ be another noetherian domain that is 
stratiform and $\overline{B}$ be
an iterated Ore extension 
$B[t'_1;\sigma'_1,\delta'_1][t'_2;\sigma'_2, \delta'_2]\cdots 
[t'_n;\sigma'_n,\delta'_n]$.
Suppose $\phi: \overline{A}\to \overline{B}$
is an algebra isomorphism. We need to show that
$A\cong B$. By the hypothesis and Lemma 4.4(7), 
$\phi(A)=\phi({\mathbb D}_{\overline{A}}(1))=
{\mathbb D}_{\overline{B}}(1)\subseteq B$. 
Let $B'$ denote $\phi^{-1}(B)$. Then $A\subseteq B'$
and $\Htr Q(B')=\Htr Q(B)$. By the definition of 
$\overline{B}$, we have 
$$\begin{aligned}
\Htr Q(B')&=\Htr Q(B)=\Htr Q(\overline{B})-n \qquad {\text{by Lemma \ref{xxlem3.4}}}\\
&=\Htr Q(\overline{A})-n=\Htr Q(A)\qquad {\text{by Lemma \ref{xxlem3.4}}}\\
&\leq \Htr Q(B).
\end{aligned}
$$
Therefore, $\Htr Q(A)=\Htr Q(B')<\infty$. According to Proposition \ref{xxpro3.6}, 
we have that $A=B'$. That is, $\phi: A\to B$ is indeed an isomorphism.
\end{proof}

In the rest of this section, we will prove that a class of 
simple algebras are strongly $\sigma$-invariant, but 
might not be $\delta$-cancellative, see [Example \ref{xxex5.5}(1)]. 
First of all, we need a lemma.

\begin{lemma}
\label{xxlem4.5}
Let $\overline{B}$ be an iterated Ore extension 
$B[t_1;\sigma_1]\cdots[t_n;\sigma_n]$ of an algebra $B$. 
If $A$ is a simple factor ring of $\overline{B}$ such 
that $A^{\times}=\Bbbk^{\times}$ with the quotient map 
denoted by $\pi \colon \overline{B}\longrightarrow A$,
then the image $\pi(t_{i})$ of each $t_i$ in $A$ is a 
scalar and $\pi(B)=A$. Furthermore, if $B$ is a simple 
algebra, then $A\cong B$.
\end{lemma}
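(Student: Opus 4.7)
The plan is to induct on $n$, using the fact that when every $\delta_i = 0$ the top variable $t_n$ is a normal element of $\overline{B}$. Explicitly, write $\overline{B}_{n-1} := B[t_1;\sigma_1]\cdots[t_{n-1};\sigma_{n-1}]$. For any $r \in \overline{B}_{n-1}$ one has $t_n r = \sigma_n(r) t_n$, and since $\sigma_n$ is an automorphism this yields $t_n \overline{B}_{n-1} = \overline{B}_{n-1} t_n$; extending along powers of $t_n$ gives $t_n \overline{B} = \overline{B} t_n$, i.e.\ $t_n$ is normal.

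Consequently $\pi(t_n)$ is normal in $A$, and the two-sided ideal it generates, $\pi(t_n) A = A \pi(t_n)$, must be either $0$ or all of $A$ by simplicity. In the second case $\pi(t_n)$ has both a left and a right inverse, so it is a unit, and the hypothesis $A^{\times} = \Bbbk^{\times}$ forces $\pi(t_n) \in \Bbbk$. In the first case $\pi(t_n) = 0 \in \Bbbk$. Either way $\pi(t_n)$ is a scalar, say $\lambda_n$. Since each element of $\overline{B}$ has a unique expression $\sum_i r_i t_n^i$ with $r_i \in \overline{B}_{n-1}$, we find $\pi(\overline{B}) = \pi(\overline{B}_{n-1})$, i.e.\ $\pi|_{\overline{B}_{n-1}}$ is already surjective onto $A$.

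Now apply the inductive hypothesis to the restriction $\pi|_{\overline{B}_{n-1}} \colon \overline{B}_{n-1} \to A$, whose image remains the same simple ring $A$ with $A^\times = \Bbbk^\times$. This yields $\pi(t_i) \in \Bbbk$ for $i = 1,\dots, n-1$ together with $\pi(B) = A$, completing the induction (the base case $n=0$ being trivial). For the final assertion, if $B$ is simple then $\ker(\pi|_B)$ is a two-sided ideal of $B$, hence $0$ or $B$; since $A \neq 0$ (as $\Bbbk^\times \subseteq A^\times$), the kernel is zero and $\pi|_B$ gives an isomorphism $B \cong A$.

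The only real subtlety is checking that normality of $t_n$ is preserved when passing from the "coefficient" subalgebra $\overline{B}_{n-1}$ to all of $\overline{B}$; this is immediate once one notes that $t_n$ commutes with itself and that $\sigma_n$ is an automorphism (not merely an endomorphism), so left and right multiplication by $t_n$ remain compatible on every monomial $r t_n^k$. Beyond that, the proof is essentially the standard "normal element in a simple ring is zero or a unit" trick iterated in the obvious order.
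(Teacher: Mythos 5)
Your proof is correct and follows essentially the same route as the paper's: induct on $n$, observe that $t_n$ is normal in $\overline{B}$ (since $\delta_n=0$), use simplicity and $A^{\times}=\Bbbk^{\times}$ to conclude $\pi(t_n)\in\Bbbk$, drop down to $\overline{B}_{n-1}$, and finish the simple-$B$ case by noting $\ker(\pi|_B)$ must vanish. The extra details you supply (why normality passes to the quotient, why a normal element of a simple ring is zero or a unit) are exactly the steps the paper leaves implicit.
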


\begin{proof} We use an induction argument. If $n=0$, it is 
trivial. Now we assume that $n>0$. Let $\pi$ denote the 
quotient map from $\overline{B}$ onto $A$. Since $t_n$ is 
normal in $\overline{B}$,
so is $\pi(t_n)$ in $A$. Since $A$ is simple, $\pi(t_n)$
is either zero or invertible in $A$. Since $A^{\times}=\Bbbk^{\times}$,
$\pi(t_n)$ is a scalar in $\Bbbk$. As a result, we have that 
\[
\pi(B[t_1;\sigma_1]\cdots[t_{n-1};\sigma_{n-1}])=A.
\] 
By induction, we have that $\pi(t_i)$ is a scalar in $A$ 
for $1\leq i\leq n$ and $\pi(B)=A$. If $B$ is simple, then $B\cong A$.
\end{proof}

The next result establishes the strongly $\sigma$-cancellative 
property for many simple algebras such as the first Weyl 
algebra, which is the $\Bbbk$-algebra generated by $x,y$ 
subject to the relation $xy-yx=1$. 

\begin{theorem}
\label{xxthm4.6}
Let $A$ be a right {\rm{(}}resp. left{\rm{)}} noetherian simple 
domain such that $A^{\times}=\Bbbk^{\times}$. Then $A$ is strongly 
$\sigma$-cancellative.
\end{theorem}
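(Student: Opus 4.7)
The plan is to apply Lemma~\ref{xxlem4.5} to the iterated Ore extension $\overline{A'} := A'[t'_1;\sigma'_1]\cdots[t'_n;\sigma'_n]$, using $A$ as a simple factor ring produced by the given isomorphism, and then to promote the resulting surjection $A' \twoheadrightarrow A$ to an isomorphism.

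Write $\overline{A} := A[s_1;\sigma_1]\cdots[s_n;\sigma_n]$. Since each $\delta_i = 0$, every $s_i$ is normal in $\overline{A}$, so $I := (s_1,\ldots,s_n)$ is a two-sided ideal with $\overline{A}/I \cong A$, which is simple by hypothesis. If $\phi \colon \overline{A} \to \overline{A'}$ denotes the given isomorphism, then $M := \phi(I)$ is a two-sided ideal of $\overline{A'}$ with quotient map $\pi \colon \overline{A'} \twoheadrightarrow A$. Lemma~\ref{xxlem4.5} applied to $\overline{A'}$ with the simple factor ring $A$ (using $A^\times = \Bbbk^\times$) then yields that each $\pi(t'_i)$ is a scalar $c_i \in \Bbbk$ and, crucially, that the restriction $\pi|_{A'} \colon A' \twoheadrightarrow A$ is surjective.

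Next I would verify that $A'$ inherits the basic hypotheses of the theorem. An iterated Ore extension of a right noetherian domain is a right noetherian domain (Hilbert basis theorem for Ore extensions plus the usual leading-coefficient argument), so $\overline{A}$, and hence $\overline{A'} \cong \overline{A}$, is a right noetherian domain. Consequently $A' \cong \overline{A'}/(t'_1,\ldots,t'_n)$ is a right noetherian domain, and since the units of an Ore extension of a domain coincide with those of the base, $A'^\times = \overline{A'}^\times = \overline{A}^\times = A^\times = \Bbbk^\times$.

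The main remaining step, and the technical heart of the argument, is to show that $A'$ is simple; once this is established, the ``furthermore'' clause of Lemma~\ref{xxlem4.5} immediately promotes $\pi|_{A'}$ to an isomorphism $A \cong A'$. My approach would be to examine the kernel $M' := M \cap A' = \ker(\pi|_{A'})$, a maximal two-sided ideal of $A'$ with $A'/M' \cong A$, and to argue $M' = 0$. The Ore relations $t'_i a' = \sigma'_i(a') t'_i$ combined with the congruences $t'_i \equiv c_i \pmod{M}$ give $c_i \, \overline{\sigma'_i(a')} = 0$ in $A$ for all $a' \in M'$; for each index with $c_i \ne 0$ this forces $\sigma'_i(a') \in M$, yielding partial $\sigma'$-stability of $M'$, while indices with $c_i = 0$ --- where $t'_i$ itself already lies in $M$ --- must be handled by a separate reduction that effectively removes those Ore variables. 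Iterating this stability argument across the $n$ layers of the extension and transporting the resulting ideal back to $\overline{A}$ via $\phi^{-1}$ should squeeze $M'$ against the rigid ideal structure of $\overline{A}$ coming from simplicity of $A$ and noetherianness, forcing $M' = 0$. I expect the detailed bookkeeping for this last step --- especially handling the $c_i = 0$ cases and the fact that $\sigma'_i$ for $i \ge 2$ does not restrict to an endomorphism of $A'$ --- to be the most delicate part of the argument.
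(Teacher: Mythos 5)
Your first half is exactly the paper's: form $I=(s_1,\dots,s_n)$, transport it to $M=\phi(I)$, and apply Lemma~\ref{xxlem4.5} to conclude that each $\pi(t_i')$ is a scalar and that $\pi|_{A'}\colon A'\to \overline{A'}/M\cong A$ is surjective. The gap is in the last step: you never actually prove that this surjection is injective. Your plan is to show $A'$ is simple (so that the ``furthermore'' clause of Lemma~\ref{xxlem4.5} applies) by proving $M'=M\cap A'=0$ through a $\sigma'$-stability analysis of $M'$, but this is only a sketch, and the parts you flag as delicate --- the indices with $c_i=0$, and the fact that $\sigma_i'$ for $i\ge 2$ is an automorphism of an intermediate Ore extension rather than of $A'$ --- are precisely where the argument is missing. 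As written, nothing rules out a proper nonzero kernel, and it is not clear the stability argument can be completed along the lines you describe.

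The paper closes this gap with a dimension count rather than a structural analysis of the kernel. Since all the $\delta$'s vanish and the $\sigma$'s are automorphisms, \cite[Theorem 15.19]{GoW} gives $\Kdim \overline{A}=\Kdim A+n$ and $\Kdim \overline{B}=\Kdim B+n$ (in your notation $B=A'$), whence $\Kdim A=\Kdim B$; here noetherianness of $B$ comes from the skew Hilbert basis theorem, as you correctly note. Now $\phi^{-1}\circ\pi|_B\colon B\to A$ is a surjection from a noetherian domain onto a ring of the same Krull dimension; if its kernel were nonzero it would contain a regular element (as $B$ is a domain), and factoring out an ideal containing a regular element strictly decreases Krull dimension. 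Hence the kernel is zero and $B\cong A$. You should replace your simplicity argument for $A'$ with this Krull dimension comparison (or supply some other complete proof of injectivity); simplicity of $A'$ then comes for free a posteriori rather than being an input.
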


\begin{proof}
Suppose
$$\phi: \overline{A}:=A[s_1;\sigma_1]\cdots[s_n;\sigma_n]\to
B[t_1;\tau_1]\cdots[t_n;\tau_n]=:\overline{B}$$
is an isomorphism for another algebra $B$. Here $\sigma_i$
and $\tau_i$ are automorphisms of appropriate algebras.
Then $\overline{B}$ is a noetherian domain, and consequently $B$ 
is a noetherian domain. By \cite[Theorem 15.19]{GoW}, $A$ and $B$ have 
the same Krull dimension. Let $I$ be the ideal of
$A$ generated by $\{s_i\}_{i=1}^n$. Then $A\cong \overline{A}/I$.
Let $J=\phi(I)$. Then $\overline{B}/J (\cong A)$ is simple and every 
invertible element in $\overline{B}/J$ is a scalar by 
hypothesis (1). Let $\phi$ also denote the induced isomorphism
$$A\to \overline{B}/J.$$
Let $\pi$ be the map from $\overline{B}$ onto $\overline{B}/J$.
By Lemma \ref{xxlem4.5},
$\pi(t_i)$ is a scalar in $\Bbbk$ for each $i$. Then 
$$\pi(B)=\pi(\overline{B})=\overline{B}/J.$$
Therefore, $\phi^{-1}\circ \pi|_B: B\longrightarrow \overline{B}/J
\xrightarrow{\phi^{-1}}A$ is a surjective algebra homomorphism.
Since $B$ is a domain with $\Kdim B=\Kdim A$, we obtain 
that $\phi^{-1}\circ \pi|_B$ is an isomorphism and that $B\cong \overline{B}/J 
\cong A$ as desired.
\end{proof}

\section{Comments, examples, remarks and questions}
\label{xxsec5}
In this section we give some isolated results, comments, examples, 
remarks and open questions. 

\subsection{Cancellative property of some infinitely generated algebras}
\label{xxsec5.1}
In most of the results proved in \cite{LeWZ, LuWZ}, we have assumed the algebras are 
either affine or noetherian or having finite GKdimension. In this 
subsection we make some comments on the cancellation property for some 
algebras of infinite GKdimension. The following lemma generalizes 
\cite[Lemma 2]{BR} and \cite[Corollary 1]{CE}. 

\begin{lemma}
\label{xxlem5.1}
Let $A$ and $B$ be $\Bbbk$-algebras and 
$$\phi \colon A[s_{1}, \cdots, s_{n}]\longrightarrow 
B[t_{1}, \cdots, t_{n}]$$ 
be an algebra isomorphism.
\begin{enumerate}
\item[(1)]
If $\phi(A)\subseteq B$, then $\phi(A)=B$.
\item[(2)]
If $\phi(Z(A))\subseteq B$, then $A\cong B$.
\end{enumerate}
\end{lemma}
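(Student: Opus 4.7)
The plan is to prove (1) by a change-of-variables argument in $B[t_1,\ldots,t_n]$ that compares two unique polynomial expansions, and then to deduce (2) from (1) by restricting to centers and killing the augmentation ideal.

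For part (1), I would set $A':=\phi(A)\subseteq B$ and $x_i:=\phi(s_i)$. Since each $s_i$ is central in $A[s_1,\ldots,s_n]$, each $x_i$ is central in $B[t_1,\ldots,t_n]$, so $x_i$ commutes with $A'$ and with the other $x_j$. The free left $A$-module decomposition $A[s_1,\ldots,s_n]=\bigoplus_{I\in\mathbb{N}^n}A\cdot s^I$ is carried by $\phi$ to a free left $A'$-module decomposition $B[t_1,\ldots,t_n]=\bigoplus_{I\in\mathbb{N}^n}A'\cdot x^I$, so $\{x^I\}$ is a free $A'$-basis. Moreover, $\phi$ restricts to an isomorphism on centers $Z(A)[s_1,\ldots,s_n]\to Z(B)[t_1,\ldots,t_n]$, and the hypothesis $\phi(A)\subseteq B$ forces $\phi(Z(A))\subseteq B\cap Z(B[t_1,\ldots,t_n])=Z(B)$. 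Consequently $Z(B)[t_1,\ldots,t_n]=\phi(Z(A))[x_1,\ldots,x_n]$, so in particular $x_1,\ldots,x_n$ generate $Z(B)[t_1,\ldots,t_n]$ as a $Z(B)$-algebra.

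Now Lemma~\ref{xxlem1.2}(1), applied with ambient algebra $B$ and $f_i=x_i$, yields that the $B$-algebra endomorphism of $B[t_1,\ldots,t_n]$ sending $t_i\mapsto x_i$ is an automorphism. Hence $\{x^I\}$ is also a free left $B$-basis of $B[t_1,\ldots,t_n]$, and every element of $B[t_1,\ldots,t_n]$ has two expansions: a unique $\sum_I b_I x^I$ with $b_I\in B$ and a unique $\sum_I a'_I x^I$ with $a'_I\in A'$. Since $A'\subseteq B$, uniqueness of the $B$-expansion forces $b_I=a'_I\in A'$ for every $I$; specializing to $y=b\in B$ (so only the $I=0$ term is nonzero) yields $b\in A'$ and hence $B\subseteq A'$, proving part (1).

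For part (2), the hypothesis $\phi(Z(A))\subseteq B$ again yields $\phi(Z(A))\subseteq Z(B)$, and $\phi$ restricts to an isomorphism $Z(A)[s_1,\ldots,s_n]\to Z(B)[t_1,\ldots,t_n]$ mapping $Z(A)$ into $Z(B)$. Applying part (1) to this commutative instance gives $\phi(Z(A))=Z(B)$, and Lemma~\ref{xxlem1.2}(1) once more promotes $x_1,\ldots,x_n$ to free polynomial generators of $B[t_1,\ldots,t_n]$ over $B$. Since the $x_i$ are central, $\phi$ maps the two-sided ideal $(s_1,\ldots,s_n)$ onto $(x_1,\ldots,x_n)$, and we conclude $A\cong A[s_1,\ldots,s_n]/(s_1,\ldots,s_n)\cong B[t_1,\ldots,t_n]/(x_1,\ldots,x_n)\cong B$. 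The main technical point throughout is the upgrade from $Z(B)$-generation of $Z(B)[t_1,\ldots,t_n]$ by the $x_i$ to free $B$-polynomial generation of $B[t_1,\ldots,t_n]$; this upgrade is exactly what the Brewer--Rutter lemma (Lemma~\ref{xxlem1.2}(1)) provides and is what makes the uniqueness-of-expansion comparison driving (1) legitimate.
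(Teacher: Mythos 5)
Your proof is correct and follows essentially the same route as the paper: restrict $\phi$ to centers to get $\phi(Z(A))\subseteq Z(B)$, use Lemma~\ref{xxlem1.2} to upgrade the images $\phi(s_i)$ to free central polynomial generators of $B[t_1,\cdots,t_n]$ over $B$, and then conclude (1) by comparing expansions and (2) by passing to the quotient by the ideal they generate. The only cosmetic differences are that you phrase the last step of (1) via uniqueness of coefficients rather than composing with the $B$-automorphism $\tau$, and in (2) you deduce $\phi(Z(A))=Z(B)$ from part (1) instead of citing Brewer--Rutter's Lemma~2 directly.
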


\begin{proof}
(1) We have the following restriction of the isomorphism $\phi$ to 
the respective 
centers of $A[s_{1}, \cdots, s_{n}]$ and $B[t_{1}, \cdots, t_{n}]$:
\[
\phi\colon Z(A)[s_{1}, \cdots, s_{n}]\xrightarrow{\cong}
Z(B)[t_{1}, \cdots, t_{n}].
\]
Note that $Z(B)=B\cap Z(B)[t_1,\cdots,t_n]$.
Since $\phi$ is an isomorphism and, by the hypothesis $\phi(A)\subseteq B$, 
we obtain that $\phi(Z(A))\subseteq Z(B)$. By 
\cite[Lemma 2]{BR}, we have that $\phi(Z(A))=Z(B)$. Let 
$f_{i}:=\phi(s_{i})$ for $1\leq i\leq n$. We have that 
$$Z(B)[t_{1}, \cdots, t_{n}]=Z(B)\{f_{1}, \cdots, f_{n}\}.$$ 
According to Lemma \ref{xxlem1.2}(2), we have $Z(B)[t_{1}, \cdots, t_{n}]
=Z(B)[f_{1}, \cdots, f_{n}]$. Using Lemma \ref{xxlem1.2}(1), we 
can further conclude that $B[t_{1}, \cdots, t_{n}]=B[f_{1}, \cdots, f_{n}]$, 
where $f_{1}, \cdots, f_{n}$ are considered as central 
indeterminates. Denote by $\tau$ the $B$-automorphism of 
$B[t_{1}, \cdots, t_{n}]$, which is defined by setting $\tau(f_{i})=t_{i}$. 
Then we have an isomorphism
$$\tau\circ \phi \colon A[s_{1}, \cdots, s_{n}]
\longrightarrow B[t_{1}, \cdots, t_{n}]$$ 
with $\tau\circ \phi(s_i)=t_i$ for all $i$ 
and $(\tau\circ \phi)(A)\subseteq B$. As a result, we have that $\phi(A)=B$.

(2) Similar to the proof of part (1), we have the following induced isomorphism
\[
\phi\colon Z(A)[s_{1}, \cdots, s_{n}]\longrightarrow 
Z(B)[t_{1}, \cdots, t_{n}].
\]
Since $Z(B)=B\cap Z(B)[t_1,\cdots,t_n]$, the hypothesis $\phi(Z(A))\subseteq B$ implies that 
$\phi(Z(A))\subseteq Z(B)$. By \cite[Lemma 2]{BR}, we have that $\phi(Z(A))=Z(B)$. Let 
$f_{i}:=\phi(s_{i})$ for $1\leq i\leq n$. Similar to the proof of 
part (1), we have that 
\[
B[t_{1}, \cdots, t_{n}]=B[f_{1}, \cdots, f_{n}]
\] 
where $f_{1}, \cdots, f_{n}$ are considered as central indeterminates. Going back to the isomorphism
$$\phi: \overline{A}:=A[s_{1}, \cdots, s_{n}]\longrightarrow 
B[t_{1}, \cdots, t_{n}]=
B[f_1,\cdots,f_n]=:\overline{B}, $$
one sees that $\phi$ maps the ideal of $\overline{A}$
generated by $\{s_i\}_{i=1}^w$ to the ideal of 
$\overline{B}$ generated by $\{f_i\}_{i=1}^n$. Therefore, we have that
$$A\cong \overline{A}/(s_i:i=1,\cdots,n)
\cong \overline{B}/(f_i: i=1,\cdots,n)\cong B.$$
\end{proof}

Combining some ideas in the previous section, we can further have the following result.

\begin{proposition}
\label{xxpro5.2}
Let $A$ be an algebra such that ${\mathbb D}(1)\supseteq Z(A)$.
\begin{enumerate}
\item[(1)]
Then $A$ is strongly cancellative.
\item[(2)]
If $A$ is commutative, then $A$ is strongly retractable.
\end{enumerate}
\end{proposition}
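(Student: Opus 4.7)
The plan is to reduce both statements to Lemma \ref{xxlem5.1} by using the behavior of divisor subalgebras under iterated Ore extensions, which was recorded in Lemma \ref{xxlem4.4}. The hypothesis ${\mathbb D}(1)\supseteq Z(A)$ is exactly the lever needed to force the center $Z(A)$ into $B$ under any polynomial-extension isomorphism.

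For part (1), suppose $\phi\colon A[s_{1},\ldots,s_{n}]\to B[t_{1},\ldots,t_{n}]$ is an isomorphism. Since a polynomial extension is an iterated Ore extension with $\sigma_{i}=Id$ and $\delta_{i}=0$, Lemma \ref{xxlem4.4}(7) yields
$$\phi\bigl({\mathbb D}_{A[s_{1},\ldots,s_{n}]}(1)\bigr)={\mathbb D}_{B[t_{1},\ldots,t_{n}]}(1)\subseteq B.$$
By Lemma \ref{xxlem4.4}(5) applied on both sides, ${\mathbb D}_{A[s_{1},\ldots,s_{n}]}(1)={\mathbb D}_{A}(1)$ and ${\mathbb D}_{B[t_{1},\ldots,t_{n}]}(1)={\mathbb D}_{B}(1)$. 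Therefore $\phi({\mathbb D}_{A}(1))\subseteq B$. The hypothesis ${\mathbb D}_{A}(1)\supseteq Z(A)$ then gives $\phi(Z(A))\subseteq B$, and Lemma \ref{xxlem5.1}(2) delivers $A\cong B$. This proves that $A$ is strongly cancellative.

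For part (2), if $A$ is commutative then $Z(A)=A$, so the hypothesis collapses to ${\mathbb D}_{A}(1)=A$. Running the same argument as above shows $\phi(A)=\phi({\mathbb D}_{A}(1))\subseteq B$, and then Lemma \ref{xxlem5.1}(1) upgrades this inclusion to the equality $\phi(A)=B$. By Definition \ref{xxdef1.1}(2), $A$ is strongly retractable.

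There is no real obstacle: all the heavy lifting has already been done in Lemmas \ref{xxlem4.4} and \ref{xxlem5.1}. The only point to double-check is that the divisor-subalgebra machinery of Section \ref{xxsec4} is available (it requires $A$ to be a domain, which is implicit in the standing assumption that ${\mathbb D}(1)$ be well defined), and that a polynomial extension indeed qualifies as an iterated Ore extension in order to invoke Lemma \ref{xxlem4.4}(7); both are routine.
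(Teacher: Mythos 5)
Your proof is correct and follows essentially the same route as the paper: both invoke Lemma \ref{xxlem4.4}(7) (together with the identification ${\mathbb D}_{\overline{A}}(1)={\mathbb D}_{A}(1)$ from Lemma \ref{xxlem4.4}(5), which the paper leaves implicit) to force $\phi(Z(A))\subseteq B$, and then conclude via Lemma \ref{xxlem5.1}(2) for part (1) and Lemma \ref{xxlem5.1}(1) for part (2). No substantive difference.
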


\begin{proof} (1) Let $\phi: \overline{A}=A[s_1,\cdots,s_n]\to
\overline{B}=B[t_1,\cdots,t_n]$ be an isomorphism. By 
Lemma \ref{xxlem4.4}(7), we have that
$$\phi({\mathbb D}(1))=\phi({\mathbb D}_{\overline{A}}(1))
\subseteq {\mathbb D}_{\overline{B}}(1) \subseteq B.$$
Since $Z(A)\subseteq {\mathbb D}(1)$, we have 
$\phi(Z(A))\subseteq B$. Now the assertion follows 
from Lemma \ref{xxlem5.1}(2). 

(2) To prove the second assertion, we repeat the 
above proof and apply Lemma \ref{xxlem5.1}(1).
\end{proof}

By Proposition \ref{xxpro5.2}(2), any commutative algebra $A$ with 
${\mathbb D}(1)=A$ is strongly retractable. For example,
the Laurent polynomial algebra with infinitely many variables
$\Bbbk[x_{i}^{\pm 1}:i=1,2,3, \cdots]$ is strongly retractable. 
As a consequence of Theorem \ref{xxthm0.5}, any algebra with a 
center equal to a finite direct sum of (infinite) Laurent polynomial 
algebras is strongly cancellative and Morita cancellative.
It is obvious that the infinite quantum Laurent polynomial 
algebra $T^q_{\infty}(\Bbbk)$ has this kind of property. Below 
is a similar example.

\begin{example}
\label{xxex5.3}
Let $q$ be a sequence of scalars $\{q_i\}_{i\geq 1}$
and let $A^{q}_{\infty}(\Bbbk)$ be the $\Bbbk$-algebra generated 
by an infinite set $\{x_{i}, y_{i}\}_{i\geq 1}$ subject 
to the relations: 
\begin{eqnarray*}
x_{i}x_{j}=x_{j}x_{i}, \quad y_{i}y_{j}=y_{j}y_{i},\\ 
x_{i}y_{j}=y_{j}x_{i} \, (i\neq j),\quad x_{i}y_{i}-q_{i}y_{i}x_{i}=1
\end{eqnarray*}
for $i,j\in \{1, 2, 3, \cdots\}$. It is clear that $A^q_{\infty}(\Bbbk)$ 
is an infinite tensor product of the quantum Weyl algebras defined 
in Example \ref{xxex4.3}(1). We call this algebra {\it infinite 
quantum Weyl algebra}. It is obvious that $\GKdim A^{q}_{\infty}(\Bbbk)
=\infty$. Set $z_{i}=x_{i}y_{i}-y_{i}x_{i}$ and 
let $S$ be the Ore set generated by the products of $z_{i}$. 
Let $B$ be the localization of $A^{q}_{\infty}(\Bbbk)$ with respect to $S$. 
It is easy to see that ${\mathbb D}_B(1)=B$ and ${\mathbb D}_{Z(B)}(1)=Z(B)$ 
(some details are omitted).
As a consequence of Proposition \ref{xxpro5.2} and Theorem \ref{xxthm0.5},
$B$ is strongly cancellative and strongly Morita cancellative. 
However, it is not known to us whether or not it is (strongly) skew 
cancellative.
\end{example}

\subsection{$\delta$-cancellative property of LND-rigid algebras}
\label{xxsec5.2}
In this subsection we use some ideas in \cite{BHHV} to 
show that every LND-rigid algebra is $\delta$-cancellative.
To save some space, we refer the reader to \cite[Definition 2.1]{BHHV} 
for the definitions concerning LND-rigidity and Makar-Limanov invariants
$ML(A)$.

\begin{theorem}
\label{xxthm5.4}
Suppose that $\Bbbk$ is a field of characteristic zero. 
Let $A$ be an affine $\Bbbk$-domain of finite GKdimension. Suppose
that $ML(A)=A$. Then $A$ is $\delta$-cancellative.
\end{theorem}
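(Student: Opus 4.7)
The plan is to combine the locally nilpotent derivation (LND) philosophy of \cite{BHHV} with the Gelfand--Kirillov dimension machinery developed in Section \ref{xxsec3}. Suppose $\phi\colon A[t;\delta]\iso A'[t';\delta']$ is an algebra isomorphism for some $\Bbbk$-algebra $A'$; the goal is to deduce $A\cong A'$. Since $A[t;\delta]$ is a domain of finite GKdimension, so is $A'[t';\delta']$, and therefore so is its subalgebra $A'$. Applying Lemma \ref{xxlem3.1}(3) to both extensions yields $\GKdim A'=\GKdim A[t;\delta]-1=\GKdim A$.

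I would then exploit the canonical LND on each Ore extension. Define $D\colon A[t;\delta]\to A[t;\delta]$ by $D(a)=0$ for $a\in A$ and $D(t)=1$; the Ore relation $ta-at=\delta(a)$ is mapped consistently (both sides land in $0$), so $D$ is a well-defined derivation. As $D^{n+1}$ kills any element of $t$-degree at most $n$, the map $D$ is locally nilpotent with $\ker D=A$. In particular $\ML(A[t;\delta])\subseteq A$, and symmetrically $\ML(A'[t';\delta'])\subseteq A'$.

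The main step is to prove the reverse inclusion $A\subseteq \ML(A[t;\delta])$, i.e., that every LND $E$ of $A[t;\delta]$ vanishes on $A$. Writing $E(a)=\sum_{i\geq 0}d_i(a)\,t^i$ in the free left $A$-module decomposition $A[t;\delta]=\bigoplus_{i\geq 0}At^i$, the derivation axiom for $E$ combined with the commutation $ta=at+\delta(a)$ yields a system of twisted Leibniz identities for the coefficient maps $d_i\colon A\to A$. Following the leading-coefficient strategy of \cite[Section 3]{BHHV}, the local nilpotency of $E$ forces an extracted map (built from the highest nontrivial $d_i$) to be an LND of $A$; since $\ML(A)=A$, this extracted LND is zero, and descending through the coefficients inductively forces each $d_i=0$, so $E|_A=0$. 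The characteristic zero hypothesis is essential here, as it licenses the Taylor/divided-power manipulations needed to disentangle the $\delta$-twist from the higher coefficients, while the finite GKdimension of $A$ prevents pathologies at the inductive step.

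Granting $\ML(A[t;\delta])=A$, the isomorphism invariance of $\ML$ gives
\[
\phi(A)=\phi(\ML(A[t;\delta]))=\ML(A'[t';\delta'])\subseteq A'.
\]
Setting $B=\phi(A)$, the isomorphism $\phi$ identifies $A'[t';\delta']$ with the Ore extension $B[\phi(t);\phi\delta\phi^{-1}]$ of $B$, so we obtain a chain of subalgebras $B\subseteq A'\subseteq B[\phi(t);\phi\delta\phi^{-1}]$ with $\GKdim B=\GKdim A=\GKdim A'<\infty$. Proposition \ref{xxpro3.3} applied to this chain yields $B=A'$, i.e., $\phi(A)=A'$, and hence $\phi$ restricts to $A\iso A'$. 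The main obstacle is clearly the central claim $A\subseteq \ML(A[t;\delta])$: in the noncommutative Ore setting with $\delta\ne 0$, the ``constant coefficient'' $d_0$ of an LND is not a derivation of $A$, so producing a genuine LND of $A$ from $E$ requires the delicate coefficient analysis sketched above.
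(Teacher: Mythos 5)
Your proposal follows essentially the same route as the paper: equality of GK dimensions via Lemma \ref{xxlem3.1}, the identification $\ML(A[t;\delta])=A$, and then Proposition \ref{xxpro3.3} to force $A'$ to coincide with the image of $A$ inside the Ore extension. The one step you leave as a sketch --- that every locally nilpotent derivation of $A[t;\delta]$ vanishes on $A$, so that $\ML(A[t;\delta])=\ML(A)=A$ --- is precisely \cite[Lemma 5.3]{BHHV}, which the paper simply cites (using that an affine domain of finite GKdimension is an Ore domain); once you invoke that lemma in place of your coefficient analysis, your argument is complete and matches the paper's.
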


\begin{proof} We follow the proof of \cite[Proposition 5.6]{BHHV}. 

Suppose that $\phi: A[s;\delta]\longrightarrow B[t;\delta']$ is 
an isomorphism for another algebra $B$. Here $\delta$ is a 
derivation of $A$ and $\delta'$ is a derivation of $B$. By 
Lemma \ref{xxlem3.1}(1), we have that
$$\GKdim A[s;\delta]=\GKdim A+1<\infty.$$
A similar statement holds true for $B$. Thus we have that $\GKdim B=
\GKdim A$. 

Since $A$ is an affine domain of finite GKdimension, it is an Ore domain. Also note 
that $ML(A)=A$. By \cite[Lemma 5.3]{BHHV}, we have that $ML(A[s;\delta])=ML(A)$, 
which is further equal to $A$. As a result, we have the following 
$$A=ML(A)=ML(A[s;\delta])\stackrel{\phi}\iso ML(B[t;\delta'])\subseteq B.$$
Equivalently, we have that $\phi(A)\subseteq B$. Set $B'=\phi^{-1}(B)$. By 
Proposition \ref{xxpro3.3}, we have that $A=B'$ or equivalently,
$A\cong B$. Therefore, $A$ is $\delta$-cancellative.
\end{proof}

However, not every algebra satisfying the hypotheses in Theorem \ref{xxthm5.4}
is $\sigma$-cancellative. The following is an example along this line, 
see Example \ref{xxex5.5}(2).

\begin{example}
\label{xxex5.5}
Here we consider two affine domains of GKdimension two.
\begin{enumerate}
\item[(1)]
Let $A$ be the first Weyl algebra over a field $\Bbbk$ of characteristic zero. 
Then $A$ is simple with a trivial center $\Bbbk$. By \cite[Proposition 1.3]{BZ1} 
and Theorem \ref{xxthm0.4}, $A$ is universally cancellative and universally 
Morita cancellative. By Theorem \ref{xxthm4.6}, $A$ is strongly $\sigma$-cancellative. 
Now we claim that $A$ is not $\delta$-cancellative. Note that the first Weyl 
algebra $A$ can be written as $\Bbbk\langle x,y\rangle/(xy-yx-1)$.
Let $B=\Bbbk[y,z]$ and let $\delta'$ be the derivation of $B$ defined 
by $\delta'(y)=1$ and $\delta'(z)=0$. Then $A[z;\delta=0]\cong B[x;\delta']$. 
It is clear that $A\not\cong B$. Therefore, $A$ is not $\delta$-cancellative.

\item[(2)]
Let $A$ be a different algebra 
$\Bbbk_{-1}[x,y]=\Bbbk\langle x,y\rangle/(xy+yx)$ which is an
affine noetherian PI domain of GKdimension two with center
$Z=\Bbbk[x^2,y^2]$.
By \cite[Theorem 4.7 and Example 4.8]{BZ1}, $A$ is strongly 
LND-rigid and strongly cancellative. By Theorem 
\ref{xxthm5.4}, $A$ is $\delta$-cancellative. But
$A$ is not $\sigma$ cancellative as $A[z;Id_A]\cong 
\Bbbk[y,z][x;\sigma]$ for some automorphism 
$\sigma$ of the commutative polynomial algebra $\Bbbk[y,z]$.
\end{enumerate}
\end{example}

Figure \ref{E5.5.1} below summarizes the implication relations 
among several types of skew cancellations, where an arrow (resp. dotted arrow) 
means ``implies'' (resp. ``does not implies''). All the implications follow
directly from the definitions.
\begin{figure}[ht]
\caption{Relations among different types of skew cancellations}
\label{E5.5.1}
\centering
\tikzstyle{arrow} = [thick, ->, >= stealth]
\begin{tikzpicture}
\node(a){\large$\sigma$-cancellative};
\node[below of=a,node distance=30mm](b){};
\node[left of=b,node distance=50mm](c){\large cancellative};
\node[right of=b,node distance=50mm](d){\large skew cancellative};
\node[below of=b,node distance=30mm](e){};
\node[left of=e,node distance=18mm](f){\large$\delta$-cancellative};
\node[right of=e,node distance=40mm](g){\large$\sigma$-alg cancellative};
\draw[arrow](a.240)--node[anchor=west] {}(c);
\draw[arrow,dotted](c.north)--node[anchor=east] {Example \ref{xxex5.5}(2)}(a);
\draw[arrow,dotted](a.300)--node[anchor=east] {Example \ref{xxex5.5}(1)}(d);
\draw[arrow](d.north)--node[anchor=west] {}(a);
\draw[arrow,dotted](c.south)--node[anchor=east] {Example \ref{xxex5.5}(1)}(f);
\draw[arrow](f.110)--node[anchor=west] {}(c);
\draw[arrow](d.south)--node[anchor=west] {}(g);
\draw[arrow,dotted](f.3)--node[anchor=south] {Example \ref{xxex5.5}(2)}(g);
\draw[arrow](g.183)--node[anchor=north] {}(f);
\draw[arrow,dotted](a.south)--node[anchor=west] {Example \ref{xxex5.5}(2)}(f);
\draw[arrow,dotted](f.north)--node[anchor=east] {Example \ref{xxex5.5}(1)}(a);
\end{tikzpicture}
\end{figure}

Now a natural question to consider is
\begin{question}
\label{xxque5.6}
Let $A$ be an algebra as in Theorem \ref{xxthm5.4},
or specifically, the algebra in Example \ref{xxex5.5}(2).
Or suppose that $A$ is strongly LND-rigid in the sense 
of \cite[Definition 2.3]{BZ1}. 
Is then $A$ strongly $\delta$-cancellative?
\end{question}

\subsection{Derived cancellative property}
\label{xxsec5.3}
First we recall the definition of derived cancellation.
Let $M(A)$ denote the category of all right $A$-modules 
and $D(A)$ be the corresponding derived category of $M(A)$.

\begin{definition}
\label{xxdef5.7}
Let $A$ be an algebra.
\begin{enumerate}
\item[(1)]
The {\it derived cancellative} property of $A$ is defined in the
same way as in Definition \ref{xxdef0.2}(1) by replacing the 
{\sf abelian} categories $M(-)$ with the {\sf triangulated} 
categories $D(-)$. 
\item[(2)]
The {\it strongly derived cancellative} property of $A$ is defined 
in the same way as in Definition \ref{xxdef0.2}(2) by replacing the 
{\sf abelian} categories $M(-)$ with the {\sf triangulated} 
categories $D(-)$. 
\end{enumerate}
\end{definition}

The following is a version of \cite[Corollary 7.3]{LuWZ}
without the strongly Hopfian condition. Its proof is omitted
(see the proof of \cite[Corollary 7.3]{LuWZ}).

\begin{proposition}
\label{xxpro5.8}
Let $A$ be an Azumaya algebra over its center $Z$ 
which has a connected spectrum. Suppose that $Z$ is 
either {\rm{(}}strongly{\rm{)}} detectable or {\rm{(}}strongly{\rm{)}}
retractible, then $A$ is {\rm{(}}strongly{\rm{)}} 
cancellative, {\rm{(}}strongly{\rm{)}} Morita cancellative and 
{\rm{(}}strongly{\rm{)}} derived cancellative.
\end{proposition}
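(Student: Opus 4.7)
The plan is to adapt the argument of \cite[Corollary 7.3]{LuWZ}, substituting the detectability/retractability hypothesis on $Z$ in place of the strongly Hopfian condition used there. The classical cancellative and Morita cancellative conclusions already follow from Theorem \ref{xxthm2.1}, so the genuine new content lies in the derived case and in upgrading the resulting Morita equivalence to an actual algebra isomorphism via the Azumaya hypothesis.

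First I would handle the induced isomorphism of centers. Since $A$ is Azumaya over $Z$, for any commutative extension $Z\subseteq R$ the center of $A\otimes_Z R$ equals $R$; in particular $Z(A[s_1,\ldots,s_n])=Z[s_1,\ldots,s_n]$, and the analogous identity will hold for $B$ once $B$ is known to be Azumaya over its center. Given an equivalence $\mathcal{E}\colon M(A[\underline{s}])\to M(B[\underline{t}])$ (respectively a derived equivalence, or an algebra isomorphism), the map \eqref{E1.2.1}, together with its Rickard analogue in the derived case, yields an induced isomorphism
\[
\omega\colon Z[s_1,\ldots,s_n]\xrightarrow{\ \sim\ } Z(B)[t_1,\ldots,t_n].
\]

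Next I would descend to an equivalence between $A$ and $B$. By Lemmas \ref{xxlem1.6}, \ref{xxlem1.7}, and \ref{xxlem1.8}, the hypothesis that $Z$ (or $Z/N(Z)$) is strongly detectable/retractable forces $A$ to be SMZ-detectable, so that $Z(B)[\underline{t}]=Z(B)[\omega(s_1),\ldots,\omega(s_n)]$ as a polynomial ring in $n$ variables. Following the proof of Theorem \ref{xxthm2.1}, the augmentation ideal $I=(s_1,\ldots,s_n)$ of $A[\underline{s}]$ corresponds under $\mathcal{E}$ to the ideal of $B[\underline{t}]$ generated by $\omega(s_1),\ldots,\omega(s_n)$, and the quotient is isomorphic to $B$. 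Invoking \cite[Lemma 2.1(5)]{LuWZ} and its derived analogue, $\mathcal{E}$ descends to an equivalence $A\simeq B$ of the appropriate kind.

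Finally I would upgrade this equivalence to an isomorphism. Because $B$ is Morita (resp.\ derived) equivalent to the Azumaya algebra $A$ over $Z$, and $Z$ has connected spectrum, $B$ is itself Azumaya with $Z(B)\cong Z$ and represents the same Brauer class as $A$. In the Morita case, the connected spectrum of $Z$ forces the rank of any $Z$-progenerator linking $A$ to $B$ to be constant, and matching PI-degrees then give $B\cong A$. The derived case reduces to the Morita one via the classical fact that for an Azumaya algebra over a connected base the derived Picard group is generated, up to shifts, by Morita autoequivalences. The main obstacle is precisely this derived-to-Morita reduction: without the strongly Hopfian crutch of \cite[Corollary 7.3]{LuWZ}, one must verify directly that the SMZ-detectability furnished by the hypothesis on $Z$ is strong enough for the ideal generated by $\omega(s_i)$ to behave correctly in the derived setting, so that descent modulo this ideal produces an honest derived equivalence between $A$ and $B$ rather than a mere weak equivalence.
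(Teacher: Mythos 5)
Your overall strategy is the one the paper intends (the paper in fact omits the proof, deferring to \cite[Corollary 7.3]{LuWZ} and implicitly to Theorem \ref{xxthm2.1} for the removal of the strongly Hopfian hypothesis), but two points need attention. First, you have misread what must be produced at the end: Definitions \ref{xxdef0.2} and \ref{xxdef5.7} only ask for an \emph{equivalence} $M(A)\simeq M(B)$ (resp.\ $D(A)\simeq D(B)$), not an algebra isomorphism, and the only place an isomorphism $A\cong B$ is required is the classical cancellation, where the input is already an algebra isomorphism and Theorem \ref{xxthm2.1} finishes. Your final ``upgrade'' step is therefore unnecessary, and as stated it is also false: two Azumaya algebras over $Z$ in the same Brauer class with the same PI-degree need not be isomorphic (e.g.\ $\End_Z(P)$ versus $M_n(Z)$ for a non-free projective $P$ of rank $n$). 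This does not sink the proof only because the step is not needed.

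Second, and more importantly, the derived case is where you explicitly leave a ``main obstacle'' unresolved, namely whether one can descend a derived equivalence modulo the ideal $(\omega(s_1),\dots,\omega(s_n))$. That obstacle is an artifact of doing the reduction in the wrong order, and it disappears if you reduce \emph{before} descending: $A[s_1,\dots,s_n]=A\otimes_Z Z[s_1,\dots,s_n]$ is Azumaya over $Z[s_1,\dots,s_n]$, whose spectrum is connected because idempotents of a polynomial ring lie in the coefficient ring. Hence any derived equivalence $D(A[\underline{s}])\simeq D(B[\underline{t}])$ is given by a two-sided tilting complex that is a shift of a progenerator bimodule (the Rouquier--Zimmermann/Yekutieli argument used in \cite[Section 7]{LuWZ}), i.e.\ it yields a Morita equivalence $M(A[\underline{s}])\simeq M(B[\underline{t}])$. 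Now the already-established Morita cancellation (Theorem \ref{xxthm2.1}, via SMZ-detectability and \cite[Lemma 2.1(5)]{LuWZ}) gives $M(A)\simeq M(B)$, and a Morita equivalence induces a derived equivalence $D(A)\simeq D(B)$. With this reordering you never need to quotient a derived equivalence by an ideal, and the worry about ``honest derived equivalence versus weak equivalence'' does not arise. Note also that your citation should be of the statement for tilting complexes between two possibly distinct algebras, not merely of the structure of the derived Picard group of autoequivalences, although the proof is the same.
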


\begin{corollary}
\label{xxcor5.9}
Let $A$ be a domain. If $A$ is Azumaya and 
${\mathbb D}_{Z(A)}(1)=Z(A)$, then $A$ is strongly cancellative, 
strongly Morita cancellative and 
strongly derived cancellative.
\end{corollary}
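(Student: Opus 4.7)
The plan is to invoke Proposition \ref{xxpro5.8} directly, so I need to verify its two hypotheses: that the center $Z := Z(A)$ has connected spectrum, and that $Z$ is strongly retractable.

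First I would observe that since $A$ is a domain, its center $Z$ is a commutative domain (as a nonzero subring of a domain). Consequently $\Spec Z$ is irreducible, and in particular connected, so the spectrum hypothesis of Proposition \ref{xxpro5.8} is satisfied. This step is immediate.

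Next, to establish that $Z$ is strongly retractable, I would apply Proposition \ref{xxpro5.2}(2) to the commutative algebra $Z$. The hypothesis required there is that ${\mathbb D}_Z(1) \supseteq Z(Z)$. But by assumption ${\mathbb D}_{Z(A)}(1) = Z(A)$, and trivially $Z(Z) = Z$, so the required containment holds with equality. Proposition \ref{xxpro5.2}(2) then yields that $Z$ is strongly retractable.

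Finally, with both hypotheses in hand, Proposition \ref{xxpro5.8} applied to the Azumaya algebra $A$ with strongly retractable center $Z$ of connected spectrum delivers all three conclusions: $A$ is strongly cancellative, strongly Morita cancellative, and strongly derived cancellative. There is no real obstacle here; the corollary is essentially a packaging of Propositions \ref{xxpro5.2}(2) and \ref{xxpro5.8} using the fact that domains have irreducible spectra.
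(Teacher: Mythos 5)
Your proposal is correct and follows essentially the same route as the paper: apply Proposition \ref{xxpro5.2}(2) to $Z=Z(A)$ to get strong retractability, then invoke Proposition \ref{xxpro5.8}. The only difference is that you explicitly verify the connected-spectrum hypothesis (via $Z$ being a domain), which the paper leaves implicit; this is a harmless and slightly more careful presentation.
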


\begin{proof} By Proposition \ref{xxpro5.2}, $Z$ is 
strongly retractable. The assertion follows from 
Proposition \ref{xxpro5.8}.
\end{proof}

The following is also known due to \cite[Corollary 7.3]{LuWZ}. 
Note that all the algebras involved in the example below are 
Azumaya algebras.

\begin{example}
\label{xxex5.10}
As an immediate consequence of Corollary \ref{xxcor5.9}, 
the following algebras are strongly derived cancellative.  
\begin{enumerate}
\item[(1)]
Localized quantum Weyl algebra $B^q_1(\Bbbk)$ as in 
Example \ref{xxex4.3}(2) where $q$ is a root of unity.
\item[(2)]
Quantum Laurent polynomial algebras as in Example \ref{xxex4.3}(5)
where all $q_{ij}$ are roots of unity.
\item[(3)]
Any finite tensor product of algebras in parts (1) and (2).
\end{enumerate}
\end{example}

\subsection*{Acknowledgments}
X. Tang and X.-G. Zhao would like to thank J.J. Zhang and the department of 
Mathematics at University of Washington for the hospitality during 
their visits. J.J. Zhang was partially supported by the US National Science 
Foundation (No. DMS-1700825). X.-G. Zhao was partially supported by the Characteristic 
Innovation Project of Guangdong Provincial Department of 
Education (2017KTSCX173), the Science and Technology Program of Huizhou City (2017C0404020), 
and the National Science Foundation of Huizhou University (hzu201704, hzu201804).

\providecommand{\bysame}{\leavevmode\hbox to3em{\hrulefill}\thinspace}
\providecommand{\MR}{\relax\ifhmode\unskip\space\fi MR }
\providecommand{\MRhref}[2]{%

\href{http://www.ams.org/mathscinet-getitem?mr=#1}{#2} }
\providecommand{\href}[2]{#2}

\end{document}